\theoremstyle{plain}  
\newtheorem{thm}{Theorem}[section] 
\let\c@thm\c@thm\makeatother
\newtheorem{cor}{Corollary}[section]
\let\c@cor\c@thm\makeatother
\newtheorem{lem}{Lemma}[section]
\let\c@lem\c@thm\makeatother
\newtheorem{prop}{Proposition}[section]
\let\c@prop\c@thm\makeatother
\let\c@claim\c@thm\makeatother
\newtheorem*{unnumberedtheorem}{Theorem}  % unnumbered theorem
\theoremstyle{definition}
\let\c@defn\c@thm\makeatother
\newtheorem{const}{Construction}[section]
\let\c@const\c@thm\makeatother
\newtheorem{notn}{Notation}[section]
\let\c@notn\c@thm\makeatother
\theoremstyle{remark}
\newtheorem{rmk}{Remark}[section]
\let\c@rmk\c@thm\makeatother
\newtheorem{ex}{Example}[section]
\let\c@ex\c@thm\makeatother
\let\c@observationn\c@thm\makeatother
\let\c@equation\c@thm
\numberwithin{equation}{section}
\newcommand{\newrefformat}[2]{}
\newcommand{\psh}[1]{\set^{#1^{\op}}}
\crefname{lem}{Lemma}{Lemmas}
\crefname{thm}{Theorem}{Theorems}
\crefname{defn}{Definition}{Definitions}
\crefname{notn}{Notation}{Notations}
\crefname{const}{Construction}{Constructions}
\crefname{prop}{Proposition}{Propositions}
\crefname{rmk}{Remark}{Remarks}
\crefname{cor}{Corollary}{Corollaries}
\crefname{equation}{Display}{Displays}
\crefname{ex}{Example}{Examples}
\newcommand{\cC}{\mathcal{C}}
\newcommand{\cS}{\mathcal{S}}
\newcommand{\set}{\cS\!\mathit{et}}
\newcommand{\jbeta}{j_{\beta}}
\newcommand{\Wbar}{T}
\newcommand{\Func}[2]{#2^{#1}}
\DeclareMathOperator{\colim}{colim}
\DeclareMathOperator{\id}{id}
\DeclareMathOperator{\Map}{Map}
\DeclareMathOperator{\op}{op}
\DeclareMathOperator{\Dec}{Dec}
\begin{document}
\title{The unit of the total d\'{e}calage adjunction}

\author{Viktoriya Ozornova}
\address{Fakult\"at f\"ur Mathematik, Ruhr-Universit\"at Bochum, Bochum, Germany}
\email{viktoriya.ozornova@rub.de}

\author{Martina Rovelli}
\address{Department of Mathematics, Johns Hopkins University, Baltimore (MD), United States}
\email{mrovelli@math.jhu.edu}

\thanks{The second-named author was partially funded by the Swiss National Science Foundation, grant P2ELP2\textunderscore172086.}

\date{\today}

\subjclass[2010]{55U10, 18G30, 55P10}

\keywords{total d\'ecalage, $\overline{W}$-construction, ordinal sum, augmented simplicial objects}

\thanks{}

\begin{abstract}
We consider the d\'ecalage construction $\Dec$ and its right adjoint $\Wbar$. These functors are induced on the category of simplicial objects valued in any bicomplete category $\cC$ by the ordinal sum. 
We identify $\Wbar\Dec X$ with the path object $X^{\Delta[1]}$ for any simplicial object $X$. We then use this formula to 
produce
an explicit retracting homotopy for the unit $X\to \Wbar\Dec X$ of the adjunction $(\Dec,\Wbar)$. When $\cC$ is a category of objects of an algebraic nature, we then show that the unit is a weak equivalence of simplicial objects in $\cC$.
\end{abstract}
\maketitle

\section{Introduction}
Let $\sigma\colon \Delta\times \Delta \to \Delta$ denote the ordinal sum functor on the simplex category $\Delta$, described on objects via $\sigma([k],[l])=[k+1+l]$.
The induced functor $\sigma^*\colon \psh{\Delta} \to \psh{(\Delta\times\Delta)}$ is sometimes called \emph{total d\'{e}calage} 
and denoted $\Dec$, going back to Illusie \cite{Illusie}. Informally speaking, it spreads out a simplicial set $X$ ``anti-diagonally''  into a bisimplicial set $\Dec X$, which is levelwise described on objects by $(\Dec X)_{k,l}=X_{k+1+l}$. The functor $\sigma^*$ has a right adjoint $\sigma_*$.

This right adjoint often appears
in the literature; it is known as $\overline{W}$, e.g.\ in \cite{CegarraRemediosBar, CegarraRemediosBehaviour}, or as the \emph{total simplicial set} functor $T$,  e.g.\ in \cite{ArtinMazur}, or as the \emph{Artin--Mazur codiagonal}, e.g.\ in \cite{StevensonDecalage}. 
When composed with the levelwise nerve functor, the functor $\sigma_*$ yields a model for the classifying space of simplicial groups, which is also referred to as the $\overline{W}$-construction; see \cite[\textsection5]{StevensonDecalage} for further discussion.

The adjunction
\[
\Dec=\sigma^*\colon \Func{\Delta^{\op}}{\set} \rightleftarrows \Func{(\Delta\times\Delta)^{\op}}{\set}:\sigma_*=T,
\]
plays a crucial role in work by Cegarra, Heredia, Remedios \cite{CegarraRemediosBar,CegarraRemediosBehaviour,CegarraHerediaRemediosDouble} and Stevenson \cite{StevensonDecalage}, in particular due to its relation with Thomason's homotopy colimit formula and Kan's simplicial loop group functor, respectively.
In \cite[Prop.~7.1]{CegarraRemediosBehaviour} and \cite[Lemma 20]{StevensonDecalage}
the two groups of authors prove that the unit $X\to \Wbar\Dec X$ is a weak equivalence for any simplicial set $X$. Both of their proofs rely on the homotopy-theoretical
fact that the Artin--Mazur codiagonal of a bisimplicial set is weakly equivalent to its diagonal.

The aim of this article is to give a full and explicit description of the simplicial object $\Wbar\Dec X$ and of the unit $X\to\Wbar\Dec X$. This description is then used to construct an explicit retracting homotopy for the unit $X\to\Wbar\Dec X$, which is therefore a strong deformation retract.

We work in greater generality than in the above cited articles, and consider the adjunction
\[
\Dec=\sigma^*\colon \Func{\Delta^{\op}}{\cC} \rightleftarrows \Func{(\Delta\times\Delta)^{\op}}{\cC}:\sigma_*=T,
\]
where $\cC$ is any bicomplete category, rather than just the category $\set$ of sets. Examples of interest include the category $\psh{\Delta}$ of simplicial sets (more generally any category of (pre)sheaves valued in a bicomplete category), the category of small categories, and the category of (abelian) groups.
In this more general framework, the d\'ecalage construction $\Dec$ is the underlying bisimplicial space of the \emph{path construction} from \cite{BOORS2}, and the adjoint $T$ is closely related to the \emph{generalized $S_{\bullet}$-construction}.

The category $\Func{\Delta^{\op}}{\cC}$ is always cotensored over $\psh{\Delta}$, and in this paper we prove the following description of the unit, which will appear as \cref{DecUnit}.
\begin{unnumberedtheorem}
For any simplicial object $X$ in $\cC$, there is a natural isomorphism
$$\Wbar\Dec X\cong X^{\Delta[1]}$$
and, under this isomorphism, the unit $X\to \Wbar\Dec X$ of the adjunction $(\Dec,\Wbar)$ is identified with the map $X^{\Delta[0]}\to X^{\Delta[1]}$ induced by $\Delta[1]\to\Delta[0]$.
\end{unnumberedtheorem}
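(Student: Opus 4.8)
The plan is to avoid computing $\Wbar\Dec X=\sigma_*\sigma^* X$ directly and instead transport the problem across the adjunction $\sigma^*\dashv\sigma_*$ to a statement about the \emph{left} adjoint $\sigma_!$ of $\sigma^*$, where the relevant Kan extension is a colimit and hence far more tractable. Since $\cC^{\Delta^{\op}}$ is cotensored over $\sset$, the cotensor $(-)^{\Delta[1]}$ is right adjoint to the tensor $\Delta[1]\otimes(-)$, so for every simplicial object $A$ there are natural isomorphisms
\[
\cC^{\Delta^{\op}}(A,\sigma_*\sigma^* X)\cong\cC^{(\Delta\times\Delta)^{\op}}(\sigma^* A,\sigma^* X)\cong\cC^{\Delta^{\op}}(\sigma_!\sigma^* A,X),\qquad \cC^{\Delta^{\op}}(A,X^{\Delta[1]})\cong\cC^{\Delta^{\op}}(\Delta[1]\otimes A,X).
\]
By the Yoneda lemma it therefore suffices to produce a natural isomorphism of endofunctors $\sigma_!\sigma^*\cong\Delta[1]\otimes(-)$ on $\cC^{\Delta^{\op}}$; the isomorphism $\Wbar\Dec X\cong X^{\Delta[1]}$ is then immediate.

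Both $\sigma_!\sigma^*$ and $\Delta[1]\otimes(-)$ are cocontinuous, and every simplicial object is canonically a colimit $A\cong\int^{[p]}F_p(A_p)$ of free simplicial objects $F_p(c)$ (with $(F_p c)_m=\coprod_{\Delta([m],[p])}c$). Since $\sigma_!$, $\sigma^*$ and $\Delta[1]\otimes(-)$ all preserve colimits and copowers by sets, I would reduce the desired isomorphism — naturally in $[p]\in\Delta$ and $c\in\cC$ — to the single statement in simplicial sets
\[
\sigma_!\sigma^*\Delta[p]\cong\Delta[p]\times\Delta[1],\qquad\text{natural in }[p].
\]

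The heart of the matter is then to evaluate the coend computing the left Kan extension,
\[
(\sigma_!\sigma^*\Delta[p])_m=\int^{([k],[l])\in\Delta\times\Delta}\Delta([m],[k+1+l])\times\Delta([k+1+l],[p]).
\]
An element is represented by a factorization $[m]\xrightarrow{x}[k+1+l]\xrightarrow{y}[p]$, and I would send such a representative to the pair consisting of the composite $y\circ x\colon[m]\to[p]$ and the \emph{cut} $[m]\xrightarrow{x}[k+1+l]\to[1]$, the second map collapsing $\{0,\dots,k\}$ to $0$ and $\{k+1,\dots,k+1+l\}$ to $1$. Both are invariant under the coend relations, since every map in the image of $\sigma$ preserves the two blocks, so this descends to a natural map $(\sigma_!\sigma^*\Delta[p])_m\to\Delta([m],[p])\times\Delta([m],[1])=(\Delta[p]\times\Delta[1])_m$, which I claim is a bijection.

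The main obstacle is injectivity, and this is exactly where the mismatch between the augmented and non-augmented ordinal sum must be resolved. When the cut $\psi\colon[m]\to[1]$ is surjective, block-preservation forces $x$ to be of the form $\sigma(\theta)$, so every representative with cut $\psi$ slides uniquely onto the normal form $(\id_{[m]},y\circ x)$ with $[k+1+l]=[m]$. When $\psi$ is constant — the case that in the augmented picture would demand an empty block, i.e.\ ``$k=-1$'' or ``$l=-1$'', but which is disallowed since $k,l\geq0$ in $\Delta\times\Delta$ — no identity normal form exists, and I would instead check by hand that all representatives with a fixed composite are connected, using the freedom in the spare block to slide the extra vertices and a short zigzag through $\sigma([m],[1])$ to identify the \emph{a priori} different representatives; surjectivity is direct, via the identity factorization when $\psi$ is surjective and a front or back inclusion $[m]\hookrightarrow[m+1]=\sigma([m],[0])$ when $\psi$ is constant. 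For the final clause, the unit $X\to\Wbar\Dec X$ corresponds under the adjunction isomorphisms above to the counit $\sigma_!\sigma^* X\to X$ of $\sigma_!\dashv\sigma^*$; on $\Delta[p]$ this counit is the composition map $(x,y)\mapsto y\circ x$, which under the bijection above is exactly the projection $\Delta[p]\times\Delta[1]\to\Delta[p]$, i.e.\ the map induced by $\Delta[1]\to\Delta[0]$. Transporting back through the tensor–cotensor adjunction then identifies the unit with the map $X=X^{\Delta[0]}\to X^{\Delta[1]}$ induced by $\Delta[1]\to\Delta[0]$, as required.
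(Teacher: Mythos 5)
Your proposal is correct, and the outer layer of your argument coincides with the paper's: we too deduce the statement about $(\sigma^*,\sigma_*)$ from the corresponding statement about the composite of left adjoints, namely that $\sigma_!\sigma^*X\cong X\times\Delta[1]$ compatibly with the counit (our \cref{DecCounit}), and then pass to the unit by uniqueness of right adjoints exactly as you do. Where you genuinely diverge is in the proof of that key computation. You reduce to representables via density and cocontinuity and then evaluate the coend $(\sigma_!\sigma^*\Delta[p])_m=\int^{([k],[l])}\Delta([m],[k{+}1{+}l])\times\Delta([k{+}1{+}l],[p])$ directly in $\sset$, using the composite and the ``cut'' as complete invariants; the delicate point, which you correctly isolate, is the constant-cut case, where no identity normal form exists because $\Delta\times\Delta$ has no empty block, and your remedy (slide onto a front or back inclusion $[m]\hookrightarrow\sigma([m],[0])$ and connect different extensions by a zigzag through $\sigma([m],[1])$) does work. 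The paper instead resolves exactly this same empty-block obstruction structurally: we factor $\sigma_!\cong\iota^*(\sigma_a)_!(\iota\times\iota)_!$ through the augmented category $\Delta_a$, where the object $[-1]$ supplies the missing blocks, compute $(\sigma_a)_!$ by finality of a discrete category in $[k]\downarrow\sigma_a$ (\cref{finality}), and identify the two extra $\pi_0$-terms with $X_k$ via split coequalizers (\cref{CoeqSimplicial}, \cref{identificationpi0}). Your route is more elementary and self-contained (no augmented categories, no $\pi_0$ formalism), at the price of a hands-on zigzag verification in the degenerate cases and of only computing $\sigma_!$ on objects of the form $\sigma^*X$; the paper's route yields as a by-product an explicit formula for $\sigma_!Y$ on an arbitrary bisimplicial object $Y$ (\cref{sigma!}), which is more information than the theorem strictly requires.
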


The main ingredient, which appears as \cref{DecCounit}, is a careful analysis of the counit of the adjunction of $\sigma^*$ and its left adjoint $\sigma_!$.  An equivalent description of such counit in the case $\cC=\set$ was already mentioned by Cordier--Porter in \cite{CordierPorterCoherent}.
Similar combinatorics were also considered in \cite[Remark 0.16]{DuskinTriple}, \cite[\textsection 2.9]{EhlersThesis}, \cite[\textsection 4]{EhlersPorterOrdinal}, \cite[\textsection 9.3]{JardineLocal}, \cite[\textsection 3]{AKock},  \cite[\textsection 11.4]{PaoliBook2017}, \cite[\textsection II.5]{StephanThesis} and \cite[\textsection 5.1]{VerityComplicialAMS}.

Moreover, the category $\Func{\Delta^{\op}}{\cC}$ is always enriched over $\psh{\Delta}$ (see e.g.\ \cite{GoerssJardine, portermenagerie,QuillenHA})
and there is therefore a canonical notion of homotopy
between maps in $\Func{\Delta^{\op}}{\cC}$ (cf.~\cite[\textsection II.1.6]{QuillenHA} or \cite[\textsection 3.8]{RiehlCHT}). 
We then produce in \cref{corollarydeformationretract} an explicit retracting homotopy for the unit with respect to this simplicial enrichment, obtaining the following.

\begin{unnumberedtheorem}
For any simplicial object $X$ in $\cC$, the unit $X\to \Wbar\Dec X$ of the adjunction $(\Dec ,\Wbar)$ is strong deformation retract. 
\end{unnumberedtheorem}

To give further homotopical meaning to this strong deformation retract, it is convenient to have a model structure on $\cC^{\Delta^{\op}}$ that is compatible with the simplicial enrichment. For the categories of sets and of many objects of algebraic nature that have a well-behaved forgetful functor to sets (such as groups, modules, or rings), such a model structure was introduced by Quillen (see \cite[\textsection II.4]{QuillenHA} or \cite[\textsection II.5]{GoerssJardine}). 
With respect to this model structure, we prove the following corollary, that will appear as \cref{corollarydeformationretract}.

\begin{unnumberedtheorem}
Let $\cC$ be a category with a suitable forgetful functor to $\set$. For any simplicial object $X$ in $\cC$, the unit $X\to \Wbar\Dec X$ of the adjunction $(\Dec ,\Wbar)$ is a weak equivalence.
\end{unnumberedtheorem}

When specializing to $\cC=\set$, the theorem strengthens and provides a more transparent proof of the known fact that in the classical context the unit of the adjunction $(\Dec ,\Wbar)$ is a weak equivalence (see e.g.~\cite{CegarraRemediosBar}).

\addtocontents{toc}{\protect\setcounter{tocdepth}{1}}

\subsection*{Acknowledgements}
This note was inspired by a joint work Julie Bergner, Ang\'{e}lica Osorno and Claudia Scheimbauer. Moreover, we would like to thank Lennart Meier and Danny Stevenson for useful conversations.
The exposition benefited greatly from the comments of the referee, who also suggested a more general framework for the results.

\section{The main result and applications}

Let $\cC$ be any bicomplete category. By \cite[\textsection II.1]{QuillenHA}, \cite[Thm\ II.2.5]{GoerssJardine} or \cite[Prop.\ 92]{portermenagerie}, 
the category $\Func{\Delta^{\op}}{\cC}$ of simplicial objects in $\cC$ is simplicially enriched\footnote{We warn the reader that, when $\cC$ is itself a simplicial (model) category, there is an alternative simplicial enrichment that is often considered for the category $\cC^{\Delta^{\op}}$ (e.g.\ in \cite{DK,JT,rezk}) and is different from the one in the context of this paper.}.
The mapping spaces assemble into a bifunctor
\[\operatorname{Map}_{\Func{\Delta^{\op}}{\cC}}(-,-)\colon(\Func{\Delta^{\op}}{\cC})^{\op}\times \Func{\Delta^{\op}}{\cC}\to\psh{\Delta}.\]
With respect to this simplicial enrichment the category $\Func{\Delta^{\op}}{\cC}$ is moreover tensored and cotensored over $\psh{\Delta}$, in that there are functors
$$(-)\boxtimes(-)\colon\Func{\Delta^{\op}}{\cC}\times \psh{\Delta}\to\Func{\Delta^{\op}}{\cC}\text{ and }(-)^{(-)}\colon\Func{\Delta^{\op}}{\cC}\times ({\psh{\Delta}})^{\op}\to\Func{\Delta^{\op}}{\cC},$$
with the property that $X \boxtimes -$ is left adjoint to $\operatorname{Map}_{\Func{\Delta^{\op}}{\cC}}(X,-)$ and $(-) \boxtimes K$ is left adjoint to $(-)^K$.
For future reference, we recall that the tensor $X\boxtimes K$ of a simplicial object $X$ with a simplicial set $K$ is given levelwise by the formula
\[
(X\boxtimes K)_k=\coprod\limits_{K_k} X_k,
\]
and there is a canonical map $X\boxtimes K\to X$.

When $\cC$ is the category of sets, groups, rings, or modules over a fixed ring, the structure described above recovers the familiar simplicial enrichments (together with the corresponding tensors and cotensors) for the categories of simplicial sets, simplicial groups, simplicial rings, simplicial modules and simplicial groupoids, which were considered e.g.~in \cite[Ex.~6.2]{GoerssJardine}.

The restriction along the ordinal sum functor $\sigma$ induces a functor
$$\sigma^*\colon\Func{\Delta^{\op}}{ \cC} \to \Func{(\Delta\times\Delta)^{\op}}{ \cC}.$$
that can be computed componentwise as
\[(\sigma^*X)_{m,n}= X_{m+1+n}.\]
By a standard argument (cf.\ e.g.\ \cite[\textsection X.3]{MacLane}), 
the functor $\sigma^*$
has both a left adjoint $\sigma_!$ and a right adjoint $\sigma_*$, given by left and right Kan extensions of a bisimplicial object along $\sigma$. As usual in this type of constructions, $\sigma_!$ is determined by sending representables to representables according to the formula
\[\sigma_!\Delta[k,l]=\Delta[k+1+l],\]
and the value of $\sigma_*$ on a bisimplicial object $Y$ satisfies
\[(\sigma_*Y)_{n}=\Map_{\Func{(\Delta\times\Delta)^{\op}}{\cC}}(\sigma^*\Delta[n],Y).\]

As mentioned in the introduction, when $\cC=\set$
the functor $\sigma^*=\Dec$ is the classical d\'ecalage construction and $\sigma_*=\Wbar$ is the classical $\Wbar$-construction.

We are interested in describing the functor $\sigma_*\sigma^*$ and the unit of the adjunction
\[
\sigma^*\colon \Func{\Delta^{\op}}{\cC} \rightleftarrows \Func{(\Delta\times\Delta)^{\op}}{\cC}:\sigma_*.
\]
For this, we start 
by describing in general the functor $\sigma_!\sigma^*$ and the counit of the adjunction
\[\sigma_!\colon \Func{(\Delta\times\Delta)^{\op}}{\cC} \rightleftarrows \Func{\Delta^{\op}}{\cC}:\sigma^*.\]

\begin{thm}\label{DecCounit}
Let $\cC$ be a bicomplete category.  
 For any simplicial object $X$ in $\cC$, there is a natural isomorphism
 $$\sigma_!\sigma^*X\cong X\boxtimes \Delta[1]$$
 and, under this isomorphism, the counit $\sigma_!\sigma^*X\to X$ is identified with the canonical map $X\boxtimes \Delta[1]\to X$.
 \end{thm}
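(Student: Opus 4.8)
The plan is to compute the left Kan extension $\sigma_!\sigma^*X$ pointwise, via the coend (copower) formula, and to read off the answer using the ordinal-sum decomposition of the relevant hom-sets together with the density (co-Yoneda) theorem. First I would record that, since $\sigma_!$ is the left Kan extension along $\sigma^{\op}$, its value is computed levelwise by
\[
(\sigma_!\sigma^*X)_n\;\cong\;\int^{([k],[l])\in\Delta\times\Delta}\Delta\big([n],[k+1+l]\big)\cdot X_{k+1+l},
\]
where $S\cdot(-)$ denotes the copower (coproduct of $|S|$ copies) in $\cC$ and we use $(\sigma^*X)_{k,l}=X_{k+1+l}$.

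The second step is to decompose the indexing hom-set along the ordinal sum. A monotone map $[n]\to[k+1+l]=[k]\oplus[l]$ is the same datum as a choice of a cut point $i\in\{0,\dots,n+1\}$ -- recording which initial segment of $[n]$ lands in the block $[k]$ -- together with a monotone map $[i-1]\to[k]$ and a monotone map $[n-i]\to[l]$ (with the convention $[-1]=\emptyset$). This yields a natural identification
\[
\Delta\big([n],[k+1+l]\big)\;\cong\;\coprod_{i=0}^{n+1}\Delta\big([i-1],[k]\big)\times\Delta\big([n-i],[l]\big),
\]
and the index $i$ is exactly the element of $\Delta[1]_n$ determined by the partition $[n]=\{0,\dots,i-1\}\sqcup\{i,\dots,n\}$. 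Since copowers and coproducts commute with coends, the coend splits as a coproduct over $i\in\Delta[1]_n$.

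The third step evaluates each summand. For an interior index $1\le i\le n$, both $[i-1]$ and $[n-i]$ are genuine objects of $\Delta$, and two applications of the density theorem (first in the variable $[k]$, then in $[l]$, using that $[k]\mapsto X_{k+1+l}$ is a simplicial object) collapse the coend to $X_{(i-1)+1+(n-i)}=X_n$. For the two boundary indices $i=0$ and $i=n+1$ one of the two factors becomes the terminal weight $\Delta([-1],-)=\{*\}$, so the corresponding coend in that variable is a colimit over $\Deltaop$ of the d\'ecalage diagram $[k]\mapsto X_{k+1+l}$; this diagram carries an extra degeneracy, so the colimit is its augmentation term $X_l$, and density in the remaining variable again produces $X_n$. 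Thus every summand equals $X_n$, giving a levelwise isomorphism $(\sigma_!\sigma^*X)_n\cong\coprod_{\Delta[1]_n}X_n=(X\times\Delta[1])_n$.

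Finally I would check that these levelwise isomorphisms are compatible with the simplicial operators and hence assemble into an isomorphism of simplicial objects, natural in $X$: a simplicial operator acts by precomposition on $\Delta([n],-)$, and under the cut-point decomposition this is precisely the induced action on $\Delta[1]_n$, so the bookkeeping matches the simplicial structure of $X\times\Delta[1]$ by construction. The counit $\sigma_!\sigma^*X\to X$ is, at the level of the coend, induced by the identity maps on $X_{k+1+l}$ (the evaluation counit of the Kan-extension adjunction); tracing it through the per-summand density isomorphisms shows that it restricts to the identity $X_n\to X_n$ on each summand, that is, to the fold map $\coprod_{\Delta[1]_n}X_n\to X_n$, which is exactly the canonical map $X\times\Delta[1]\to X$ induced by $\Delta[1]\to\Delta[0]$. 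The main obstacle is the boundary indices $i\in\{0,n+1\}$, where density does not apply directly and one must instead invoke the extra-degeneracy collapse of the d\'ecalage colimit (this is precisely where augmented simplicial objects enter the picture); a secondary, purely bookkeeping, difficulty is verifying that the levelwise isomorphisms respect the simplicial operators and that the counit becomes the fold map.
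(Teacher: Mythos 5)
Your proposal is correct, and its combinatorial core coincides with the paper's: the decomposition
\[
\Delta\bigl([n],[k+1+l]\bigr)\;\cong\;\coprod_{i=0}^{n+1}\Delta_a\bigl([i-1],[k]\bigr)\times\Delta_a\bigl([n-i],[l]\bigr)
\]
is exactly the unique ordinal-sum factorization of \cref{betais} (equivalently, the finality statement of \cref{finality}), and your ``extra-degeneracy collapse'' at the boundary indices is exactly the split-fork argument of \cref{CoeqSimplicial} together with \cref{identificationpi0}. What differs is the packaging. The paper never writes a coend: it factors $\sigma_!$ as $\iota^*(\sigma_a)_!(\iota\times\iota)_!$ through the augmented simplex category, computes $(\iota\times\iota)_!$ by an explicit $\pi_0$-formula and $(\sigma_a)_!$ by exhibiting a discrete initial subcategory of the comma category $[k]\downarrow\sigma_a$, and only then assembles the pieces. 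You instead compute the left Kan extension pointwise as $\int^{([k],[l])}\Delta([n],[k+1+l])\cdot X_{k+1+l}$, split the weight along the cut point, and apply co-Yoneda summand by summand. Your route is shorter and makes the appearance of $\Delta[1]_n$ as the indexing set completely transparent; the paper's route avoids coend calculus and produces along the way an explicit formula for $\sigma_!Y$ for an arbitrary bisimplicial object $Y$ (\cref{sigma!}), which is more information than the theorem needs. The one place where your ``purely bookkeeping'' step deserves real care is the compatibility of the levelwise isomorphisms with simplicial operators when an operator $\beta$ moves an interior cut point to a boundary one (the cases $\jbeta(i)=-1$ or $\jbeta(i)=l$ in the paper's notation): there the identification of the source summand is via co-Yoneda while that of the target summand is via the split-coequalizer quotient, and one must check these two identifications intertwine the structure map of $X$ with the quotient onto $\pi_0$. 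This is precisely the case analysis carried out in the proof of \cref{sigma!}; it does go through, so your argument is complete once that verification is written out.
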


Before proving \cref{DecCounit}, we discuss a few direct consequences.
First, the theorem can be used to produce an explicit description for the unit of the adjunction $(\sigma^*,\sigma_*)$.

\begin{cor}\label{DecUnit}
Let $\cC$ be a bicomplete category. 
 For any simplicial object $X$ in $\cC$, there is a natural isomorphism
 $$\sigma_*\sigma^*X\cong X^{\Delta[1]}$$
 and, under this isomorphism, the unit $X\to\sigma_*\sigma^*X$ is identified with the map $X^{\Delta[0]}\to X^{\Delta[1]}$ induced by $\Delta[1]\to\Delta[0]$.
\end{cor}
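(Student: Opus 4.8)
The plan is to deduce the corollary from \cref{DecCounit} by a purely formal argument: transposing the identity across the chain of adjunctions $\sigma_!\dashv\sigma^*\dashv\sigma_*$ together with the tensor--cotensor adjunction, and then invoking the Yoneda lemma.

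First I would establish the isomorphism at the level of objects. For an arbitrary simplicial object $Y$, composing the defining natural bijections of the adjunctions $(\sigma^*,\sigma_*)$ and $(\sigma_!,\sigma^*)$ with the isomorphism of \cref{DecCounit} (applied to $Y$) and with the tensor--cotensor adjunction yields a chain
$$\operatorname{Hom}(Y,\sigma_*\sigma^* X)\cong\operatorname{Hom}(\sigma^* Y,\sigma^* X)\cong\operatorname{Hom}(\sigma_!\sigma^* Y, X)\cong\operatorname{Hom}(Y\times\Delta[1], X)\cong\operatorname{Hom}(Y, X^{\Delta[1]}),$$
in which every bijection is natural in $Y$. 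Since the composite is natural in $Y$, the Yoneda lemma produces the asserted natural isomorphism $\sigma_*\sigma^* X\cong X^{\Delta[1]}$.

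Next I would identify the unit under this isomorphism by chasing the identity morphism. Setting $Y=X$, the unit $\eta_X\colon X\to\sigma_*\sigma^* X$ is by definition the transpose of $\id_{\sigma^* X}$ across $(\sigma^*,\sigma_*)$, so it corresponds to $\id_{\sigma^* X}$ under the first bijection above. The second bijection sends $\id_{\sigma^* X}$ to the counit $\epsilon_X\colon\sigma_!\sigma^* X\to X$ of $(\sigma_!,\sigma^*)$, which by \cref{DecCounit} is carried to the canonical map $X\times\Delta[1]\to X$. It therefore remains only to compute the transpose of this canonical map across the tensor--cotensor adjunction and to check that it is the map $X^{\Delta[0]}\to X^{\Delta[1]}$ induced by $\Delta[1]\to\Delta[0]$.

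This last verification is the only step requiring genuine, if routine, work, and I expect the bookkeeping here to be the main obstacle. I would handle it by recalling that the canonical map $X\times\Delta[1]\to X$ is, by construction, the composite of $\id_X\times(\Delta[1]\to\Delta[0])\colon X\times\Delta[1]\to X\times\Delta[0]$ with the canonical isomorphism $X\times\Delta[0]\cong X$. Exploiting the naturality of the evaluation counit $X^K\times K\to X$ in $K$ along the map $\Delta[1]\to\Delta[0]$ produces a commuting square exhibiting this canonical map and the map $X^{\Delta[0]}\to X^{\Delta[1]}$ induced by $\Delta[1]\to\Delta[0]$ as a transpose pair under the tensor--cotensor adjunction. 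Combined with the previous paragraph, this identifies $\eta_X$ with the map induced by $\Delta[1]\to\Delta[0]$, completing the proof.
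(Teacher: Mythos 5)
Your proof is correct and follows essentially the same route as the paper: the paper deduces the isomorphism from \cref{DecCounit} by uniqueness of right adjoints, which is exactly your Yoneda argument on the chain of hom-set bijections. You are in fact slightly more thorough, since you explicitly chase the identity through the adjunctions to verify the identification of the unit, a step the paper leaves implicit.
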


\begin{proof}
The natural isomorphism follows from the fact $\sigma_*\sigma^*$ and $(-)^{\Delta[1]}$ are the right adjoints of the functors $\sigma_!\sigma^*$ and $(-)\boxtimes\Delta[1]$, which are isomorphic by \cref{DecCounit}, and by the uniqueness of right adjoints (dual to \cite[Corollary IV.1.1]{MacLane}).
\end{proof}

Given that the category $\Func{\Delta^{\op}}{\cC}$ is always enriched, tensored and cotensored over $\psh{\Delta}$, there is a canonical notion of homotopy
between maps (cf.~\cite[\textsection II.1.6]{QuillenHA} or \cite[\textsection3.8]{RiehlCHT}) and therefore of strong deformations retracts. Relying on the explicit description of the unit, we can exploit the homotopy theory of simplicial sets and prove the following.

\begin{cor}
\label{corollarydeformationretract}
Let $X$ be a simplicial object in $\cC$.
The unit $X\to\sigma_*\sigma^*X$ is a strong deformation retract.
\end{cor}

\begin{proof}
Thanks to \cref{DecUnit}, it is enough to prove that the map $X\cong X^{\Delta[0]}\to X^{\Delta[1]}$ is a strong deformation retract.
To see this, consider the maps
$$d^1\colon\Delta[0]\to\Delta[1]\text{ and }s^0\colon\Delta[1]\to\Delta[0]$$
which satisfy $s^0\circ d^1=\id_{\Delta[0]}$ and $d^1\circ s^0\simeq_l\id_{\Delta[1]}$, where the symbol $\simeq_l$ denotes the (non-symmetric) relation of left homotopy of simplicial maps. They induce maps
$$d_1\colon X^{\Delta[1]}\to X^{\Delta[0]}\text{ and }s_0\colon X^{\Delta[0]}\to X^{\Delta[1]}$$
which satisfy the relations $d_1\circ s_0=\id_{X^{\Delta[0]}}$ and $s_0\circ d_1\simeq_l\id_{X^{\Delta[1]}}$. This completes the proof.
\end{proof}

Having in mind categories of an algebraic nature (such as those of groups, rings and modules), Quillen identifies certain conditions on a category $\cC$ so that the category $\cC^{{\Delta}^{\op}}$ supports a model structure that is compatible with the simplicial enrichment. Quillen's original result is \cite[Thm\ II.4.4]{QuillenHA}, but we here recall the formulation from \cite[Thm\ 5.1 \& 5.4]{GoerssJardine}.

\begin{thm}
Let $\cC$ be a bicomplete category, and $U\colon\cC^{{\Delta}^{\op}}\to\set^{{\Delta}^{\op}}$ a functor that admits a left adjoint and that respects filtered colimits.
Then the category $\cC^{{\Delta}^{\op}}$ supports a simplicial model structure in which a morphism $f\colon X\to X'$ is a fibration (resp. weak equivalence) if and only if $Uf\colon UX\to UX'$
is a fibration (resp. weak equivalence) in the Kan--Quillen model structure, provided that every map with the left lifting property with respect to all fibrations is a weak equivalence.
\end{thm}

In particular, this model structure recovers the Kan--Quillen model structure, and the usual model structure for simplicial objects of algebraic nature, as recalled in  \cite[Ex.\ 6.2]{GoerssJardine}. The model structure for simplicial commutative rings and for simplicial commutative algebras over a commutative ring are used e.g.~in \cite{mathew,tv}.

With respect to this model structure, we obtain the following corollary.

 \begin{cor}
\label{corollaryweakequivalence}
 For any simplicial object $X$, the unit $X\to\sigma_*\sigma^*X$ is a weak equivalence in $\cC^{\Delta^{\op}}$.
 \end{cor}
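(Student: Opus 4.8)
The plan is to deduce this directly from \cref{DecUnit} specialized to $\cC=\set$, where $\Func{\Delta^{\op}}{\set}=\sset$ and the cotensor $X^K$ of a simplicial set $X$ over a simplicial set $K$ is the usual function complex, with $n$-simplices $\mathrm{Hom}_{\sset}(\Delta[n]\times K,X)$ and with $X^{\Delta[0]}\cong X$. By \cref{DecUnit}, the unit $X\to\sigma_*\sigma^*X$ is identified with the map $X^{p}\colon X^{\Delta[0]}\to X^{\Delta[1]}$ induced by the unique map $p\colon\Delta[1]\to\Delta[0]$. So it suffices to prove that $X^{p}$ is a weak equivalence.

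First I would observe that $p$ is not merely a weak equivalence but a simplicial homotopy equivalence. Let $i\colon\Delta[0]\to\Delta[1]$ select the vertex $0$. Then $p\circ i=\id_{\Delta[0]}$, while $i\circ p\simeq\id_{\Delta[1]}$ via the standard contraction of $\Delta[1]$ onto its initial vertex, namely the homotopy $H\colon\Delta[1]\times\Delta[1]\to\Delta[1]$ induced by the order-preserving map $(a,b)\mapsto\min(a,b)$ on $[1]\times[1]$; its restrictions at the two endpoints of the homotopy coordinate are $i\circ p$ and $\id_{\Delta[1]}$. Thus $p$ and $i$ are mutually inverse simplicial homotopy equivalences.

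The key step is to transport this homotopy equivalence through the cotensor. Since $\sset$ is cotensored over $\sset$, the functor $X^{(-)}\colon\sset^{\op}\to\sset$ is simplicially enriched: using the natural isomorphism $X^{K\times\Delta[1]}\cong (X^K)^{\Delta[1]}$, a simplicial homotopy $K\times\Delta[1]\to L$ between maps $f_0,f_1\colon K\to L$ is carried to a simplicial homotopy between the induced maps $X^{f_0},X^{f_1}\colon X^L\to X^K$. Applying $X^{(-)}$ to the identities $p\circ i=\id$ and $i\circ p\simeq\id$ therefore shows that $X^{i}$ and $X^{p}$ are mutually inverse simplicial homotopy equivalences. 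In particular $X^{p}$ is a simplicial homotopy equivalence, hence a weak equivalence, which is what we wanted.

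The only point requiring care --- and the reason I argue through homotopy equivalences rather than invoking a statement of the form ``the function complex preserves weak equivalences'' --- is that $X$ is an \emph{arbitrary} simplicial set, not assumed to be a Kan complex. Simplicial homotopy equivalences, unlike general weak equivalences, are preserved by every simplicially enriched functor with no fibrancy hypothesis, and it is precisely the fact that $p$ is a genuine homotopy equivalence that lets the argument run uniformly for all $X$.
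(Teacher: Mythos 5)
Your proof is correct and takes essentially the same route as the paper: both reduce via \cref{DecUnit} to the map $X^{\Delta[0]}\to X^{\Delta[1]}$ and show it is a simplicial homotopy equivalence because $\Delta[1]\to\Delta[0]$ is one, hence a weak equivalence. The only cosmetic difference is that you make explicit the cotensor identity $X^{K\times\Delta[1]}\cong (X^K)^{\Delta[1]}$ to transport the homotopy, while the paper leaves that step implicit and instead spells out the final deduction (simplicial homotopy equivalence $\Rightarrow$ weak equivalence) via geometric realization.
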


\begin{proof}
The statement follows from the fact that in a simplicial model structure all deformation retracts are weak equivalences by \cite[Prop.\ 9.5.16]{Hirschhorn}.
\end{proof}

When $\cC=\set$ and the category $\set^{\Delta^{\op}}$ is endowed with the Kan--Quillen model structure, the corollary specializes to the well-known \cite[Prop.\ 7.1]{CegarraRemediosBar}, and it seems to be new in its generality.

\section{The proof of the main result}

To prove the theorem, we will use the relation of $\Delta$ with the category $\Delta_a$, which is the category $\Delta$ with an additional initial object $[-1]=\varnothing$.
The ordinal sum $\sigma_a$ makes sense as a functor
$\sigma_a\colon \Delta_a\times \Delta_a\to \Delta_a$
and endows $\Delta_a$ with a monoidal structure, whose unit object is the new object $[-1]$.
 We will denote the inclusion of $\Delta$ into $\Delta_a$ by $\iota\colon \Delta \to \Delta_a$.
 
\begin{rmk}
\label{additionalmaps} 
Given a map $\beta\colon[l]\to[k]$ in $\Delta_a$, either $l\neq-1$ and $\beta$ actually lives in $\Delta$, or $l=-1$ and $\beta$ can be written as a composite of the unique map $[-1]\to[0]$ with a map $[0]\to[k]$ in $\Delta$.

In particular, a presheaf $X\in\Func{\Delta_a^{\op}}{\cC}$ is an \emph{augmented simplicial object} in $\cC$, and to specify the structure of $X$ it is enough to specify its structure as a simplicial object $X\in\Func{\Delta^{\op}}{\cC}$, together with an extra face map $d_0\colon X_0\to X_{-1}$ that coequalizes all the other structure maps, i.e., it satisfies the extra simplicial identity $d_0d_0=d_0d_1\colon X_1\to X_{-1}$.

Similarly, to specify the structure map of a presheaf $Y\in\Func{(\Delta_a\times\Delta_a)^{\op}}{\cC}$ it is enough to specify its structure as a bisimplicial object $Y\in\Func{(\Delta\times\Delta)^{\op}}{\cC}$, together with the additional structure maps for the objects that involve $[-1]$. 
\end{rmk}

Since $\cC$ is cocomplete, the functors
$$(\iota\times \iota)^*\colon \Func{\Delta_a^{\op}\times \Delta_a^{\op}}{\cC}\to \Func{\Delta^{\op}\times \Delta^{\op}}{\cC},$$
$$\text{and }\sigma_a^*\colon \Func{\Delta_a^{\op}}{\cC}\to \Func{\Delta_a^{\op}\times \Delta_a^{\op}}{\cC}$$ both admit left adjoints $(\iota\times\iota)_!$ and $(\sigma_a)_!$, which can be used to describe $\sigma_!$. Indeed, it is pointed out in \cite[\textsection2]{StevensonParametrized} that for every simplicial object $X$ there is a natural isomorphism
$$\sigma_!(X)\cong\iota^*(\sigma_a)_!(\iota\times \iota)_!(X).$$

We give an explicit formula for $(\iota\times\iota)_!$ that makes use of the construction $\pi_0$, a generalization of the set of connected components of a simplicial set.

\begin{notn}
\label{definitionpi0}
 Let $\cC$ be a category with coequalizers. For a simplicial object $X$ in $\cC$, we denote by
 \[
  \pi_0(X):=\colim\left(X_1\rightrightarrows X_0\right)
 \]
the coequalizer of the face maps $d_1,d_0\colon X_1\to X_0$. This defines a functor
\[
\pi_0\colon \Func{\Delta^{\op}}{\cC}\rightarrow \cC.
\]
\end{notn}

We can now give a formula for $(\iota\times\iota)_!$, which was known to experts.
Note that,
for any bisimplicial object $Y$ in $\cC$, the assignment $[k]\mapsto \pi_0(Y_{k,-})$ defines a simplicial object in $\cC$, to which $\pi_0$ can be applied again. The result is denoted by $\pi_0\pi_0(Y)$.  

\begin{prop}\label{iotatimesiota!}
  Let $\cC$ be a category with coequalizers. The left adjoint
  $$(\iota\times\iota)_!\colon \Func{\Delta^{\op}\times \Delta^{\op}}{\cC}\to \Func{\Delta_a^{\op}\times \Delta_a^{\op}}{\cC}$$ of the precomposition functor $(\iota\times \iota)^*$
  is given levelwise on objects by
\[
 ((\iota\times\iota)_!Y)_{i,i'}=\begin{cases}
               \pi_0(\pi_0(Y)), \mbox{ if }i=i'=-1,\\
		\pi_0(Y_{i,-}), \mbox{ if }i>-1, i'=-1,\\
		\pi_0(Y_{-,i'}),\mbox{ if } i=-1, i'>-1,\\
		Y_{i,i'}, \mbox{ else.}
              \end{cases}
\]

The bisimplicial structure of $(\iota\times\iota)_!Y$ is inherited from $Y$, and
the additional structure maps (in the sense of \cref{additionalmaps}) are given by the quotient maps
$$Y_{i,0}\to \pi_0(Y_{i,-})\text{ and }Y_{0,i}\to \pi_0(Y_{-,i})$$
and the maps induced on $\pi_0$. 
\end{prop}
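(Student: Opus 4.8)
The plan is to identify the functor $(\iota\times\iota)_!$ with the pointwise left Kan extension along $(\iota\times\iota)^{\op}=\iota^{\op}\times\iota^{\op}\colon\Delta^{\op}\times\Delta^{\op}\to\Delta_a^{\op}\times\Delta_a^{\op}$ and to evaluate the defining colimit at each object $([i],[i'])$ of $\Delta_a\times\Delta_a$. Recall that this formula expresses $((\iota\times\iota)_!Y)_{i,i'}$ as the colimit over the comma category $\bigl((\iota^{\op}\times\iota^{\op})\downarrow([i],[i'])\bigr)$ of the diagram induced by $Y$ along the projection. First I would observe that this comma category splits as a product $(\iota^{\op}\downarrow[i])\times(\iota^{\op}\downarrow[i'])$, since both the functor and the target object are products; a Fubini argument for colimits then lets the value be computed one variable at a time.

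Next I would analyze the single-variable comma category $(\iota^{\op}\downarrow[i])$. When $i\geq0$, the inclusion $\iota$ is fully faithful, so $([i],\id)$ is a terminal object and the colimit reduces to evaluation at $[i]$. When $i=-1$, the object $[-1]=\varnothing$ is initial in $\Delta_a$, so there is a unique map $[-1]\to[j]$ for every $[j]$; this identifies $(\iota^{\op}\downarrow[-1])$ with $\Delta^{\op}$ and the induced diagram with $Y$ itself. The key lemma here is that the colimit of a simplicial object over $\Delta^{\op}$ is computed by $\pi_0$, i.e.\ by the coequalizer of $d_0$ and $d_1$; I would prove this by checking that $\pi_0$ and $\colim_{\Delta^{\op}}$ are both left adjoint to the constant-diagram functor, the only delicate point being that a cocone under $X$ is determined by its component at $X_0$, using connectedness of $[j]$ to force the components to agree. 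Assembling the four cases according to the signs of $i$ and $i'$, and applying Fubini in the corner $i=i'=-1$ to obtain $\pi_0\pi_0(Y)$, yields the stated formula on objects. Since every colimit that occurs is an iterated coequalizer, the formula is well defined and provides a genuine left adjoint already when $\cC$ only has coequalizers, as hypothesized.

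Finally I would pin down the structure maps by naturality of the Kan extension: a morphism of $\Delta_a^{\op}\times\Delta_a^{\op}$ induces a functor between the corresponding comma categories and hence a canonical map of colimits. For morphisms with all indices at least $0$ this simply recovers the bisimplicial structure of $Y$; for the additional morphisms out of $[-1]$, such as the one induced by $[-1]\to[0]$ in the first variable, the induced map is the colimit coprojection at the index $[0]\in\Delta^{\op}$, which is exactly the quotient map $Y_{i,0}\to\pi_0(Y_{i,-})$, and symmetrically in the other variable and in the corner. That these quotient maps coequalize the remaining structure maps is precisely the defining property of $\pi_0$, in agreement with \cref{additionalmaps}. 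I expect the main obstacle to lie not in the product decomposition or the four-case evaluation, which are routine once the key lemma is available, but in the careful bookkeeping of the comma categories and of the induced maps on colimits---in particular, in verifying rather than merely asserting the identification $\colim_{\Delta^{\op}}X\cong\pi_0(X)$ together with its compatibility with all the structure maps.
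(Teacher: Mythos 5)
Your proof is correct, but it is organized differently from the paper's. The paper factors $(\iota\times\iota)_!$ as the composite $(\iota\times\id)_!\circ(\id\times\iota)_!$, identifies $(\iota\times\id)_!$ with the one-variable $\iota_!$ acting on $\Func{J}{\cC}$-valued presheaves (\cref{FunctorCategoriesAdjShriek}), and then simply invokes the one-variable formula of \cref{iota!}, which is itself quoted from Stevenson without proof. You instead compute the pointwise left Kan extension directly: you split the comma category over $([i],[i'])$ as a product, apply Fubini, observe that the factor over $[i]$ has a terminal object when $i\geq0$ (by full faithfulness of $\iota$) and is isomorphic to $\Delta^{\op}$ when $i=-1$, and prove the key identification $\colim_{\Delta^{\op}}X\cong\pi_0(X)$ by comparing left adjoints to the constant-diagram functor. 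In effect you supply a proof of \cref{iota!} that the paper omits, and your Fubini step plays the role of the paper's composite-of-adjoints factorization, which is the same reduction to one variable at a time in a different guise. Your approach is more self-contained and makes visible why coequalizers suffice (every colimit that arises is an iterated reflexive coequalizer over $\Delta^{\op}$); the paper's is shorter because it delegates the one-variable computation to a citation and the two-variable bookkeeping to the formal \cref{FunctorCategoriesAdjShriek}. Your identification of the additional structure maps as colimit coprojections at the index $[0]$ correctly recovers the quotient maps $Y_{i,0}\to\pi_0(Y_{i,-})$ and $Y_{0,i}\to\pi_0(Y_{-,i})$, and the point you flag as delicate---that a cocone under a simplicial object is determined by its $X_0$-component coequalizing $d_0$ and $d_1$---is indeed the only place where a real argument is needed.
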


The proof of the proposition needs two preliminary lemmas. We start by recording the following description of $\iota_!$, the left adjoint of the restriction functor $\iota^*\colon \Func{\Delta_a^{\op}}{\cC}\to \Func{\Delta^{\op}}{\cC}$.

\begin{lem}[{\cite[\textsection2.2]{StevensonParametrized}}]
\label{iota!}
 Let $\cC$ be a category with coequalizers. The left adjoint
 $$\iota_!\colon \Func{\Delta^{\op}}{\cC}\to \Func{\Delta_a^{\op}}{\cC}$$ of the restriction functor $\iota^*$ is given levelwise on objects by
\[
 (\iota_!X)_k=\begin{cases}
               \pi_0(X), \mbox{ if }k=-1,\\
		X_k, \mbox{ else.}
              \end{cases}
\]
The simplicial structure of $\iota_!X$ is inherited from $X$, and
the additional structure map (in the sense of \cref{additionalmaps}) of $\iota_!X$ is given by the quotient map $X_0\to \pi_0(X)$.
\end{lem}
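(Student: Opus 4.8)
The plan is to recognize $\iota_!$ as a left Kan extension and compute it pointwise. Since $\iota^*$ is precomposition with $\iota^{\op}\colon\Delta^{\op}\to\Delta_a^{\op}$, its left adjoint is the left Kan extension $\iota_!=\operatorname{Lan}_{\iota^{\op}}$, whose value at an object $[k]$ of $\Delta_a$ is the colimit of $X$ over the comma category $(\iota^{\op}\downarrow[k])$. I would first dispatch the case $k\ge 0$: because $\iota$ is fully faithful, the pair $([k],\id_{[k]})$ is terminal in this comma category, so the colimit collapses to $X_k$, and the maps induced between non-negative degrees are exactly the original structure maps of $X$. This yields the second case of the formula and shows that the simplicial structure is inherited.

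The heart of the argument is the level $k=-1$. Here I would analyze the comma category $(\iota^{\op}\downarrow[-1])$: since $[-1]$ is initial in $\Delta_a$, there is a unique map $[-1]\to[m]$ for every $[m]$ in $\Delta$, so the objects of the comma category are indexed by $[m]\in\Delta$ and its morphisms are precisely the morphisms of $\Delta^{\op}$. In other words, the comma category is isomorphic to $\Delta^{\op}$ and the diagram to be colimited is $X$ itself, whence $(\iota_! X)_{-1}=\colim_{\Delta^{\op}}X$. I would then invoke the standard fact that the colimit of a simplicial object is the coequalizer of its two face maps $d_0,d_1\colon X_1\rightrightarrows X_0$ --- equivalently, that the full subcategory of $\Delta^{\op}$ on $[0]$ and $[1]$ is final --- to identify this colimit with $\pi_0(X)$. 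In particular, only a coequalizer is required, which is exactly why the hypothesis that $\cC$ has coequalizers suffices for $\iota_!$ to be defined on objects.

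It remains to identify the additional structure map. The unique morphism $[-1]\to[0]$ in $\Delta_a$ corresponds under the Kan extension to the leg of the colimit cocone at $[0]$, namely the canonical quotient $X_0\to\pi_0(X)$; and \cref{additionalmaps} confirms that this single extra map, together with the inherited simplicial structure, determines the object $\iota_! X$ of $\Func{\Delta_a^{\op}}{\cC}$. As a sanity check, and to make the structure maps fully explicit, I would alternatively verify the adjunction directly: using \cref{additionalmaps}, a morphism $\iota_! X\to Y$ amounts to a simplicial map $X\to\iota^* Y$ together with a map $\pi_0(X)\to Y_{-1}$ compatible with the structure map $Y_0\to Y_{-1}$; since $Y_0\to Y_{-1}$ coequalizes the face maps, the universal property of $\pi_0$ shows this extra datum is uniquely determined by the simplicial map, yielding the natural bijection $\Func{\Delta_a^{\op}}{\cC}(\iota_! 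X,Y)\cong\Func{\Delta^{\op}}{\cC}(X,\iota^* Y)$.

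The main obstacle I anticipate is the bookkeeping of opposite categories in the $k=-1$ computation: one must check that the comma category is genuinely $\Delta^{\op}$ (and not $\Delta$), so that the relevant universal object is the colimit $\pi_0(X)$ rather than a limit, and then give a clean justification of the identification $\colim_{\Delta^{\op}}X\cong\pi_0(X)$. Everything else is formal.
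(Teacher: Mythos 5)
Your proof is correct. There is, however, nothing in the paper to compare it against: the authors state this lemma with a citation to Stevenson and give no proof, using it as a black box in the proof of \cref{iotatimesiota!}. Your argument --- computing $\iota_!=\operatorname{Lan}_{\iota^{\op}}$ pointwise over the comma categories $(\iota^{\op}\downarrow[k])$, collapsing the case $k\ge 0$ via the terminal object $([k],\id)$ coming from full faithfulness of $\iota$, identifying $(\iota^{\op}\downarrow[-1])$ with $\Delta^{\op}$ because $[-1]$ is initial in $\Delta_a$, and reducing $\colim_{\Delta^{\op}}X$ to the coequalizer $\pi_0(X)$ by finality of the $1$-truncation --- is precisely the technique the authors themselves use later for $(\sigma_a)_!$ in \cref{sigmaa!} (pointwise Kan extension formula plus a finality argument), so it supplies the missing proof in a style consistent with the paper. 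Two details are worth spelling out if this is written up: first, the finality claim for the subcategory on $[0]$ and $[1]$ of $\Delta^{\op}$ (connectedness of the relevant comma categories, using that any two vertices of $[n]$ are joined by an edge) is what makes the hypothesis ``$\cC$ has coequalizers'' suffice, as you note; second, in your direct verification of the adjunction, the existence of the induced map $\pi_0(X)\to Y_{-1}$ uses that the composite $X_0\to Y_0\to Y_{-1}$ coequalizes $d_0,d_1\colon X_1\to X_0$, which follows from the naturality squares of $X\to\iota^*Y$ in degrees $\le 1$ together with the coequalizing property of $Y_0\to Y_{-1}$ recorded in \cref{additionalmaps} --- you gesture at this and it does go through.
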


The following lemma is straightforward.

\begin{lem}\label{FunctorCategoriesAdjShriek}
 Let $I_1, I_2, J$ be small categories, $f\colon I_1\to I_2$ a functor, and $\cC$ a cocomplete category. Then the functors
\[
\begin{aligned}
 (f\times \id)_! \colon \Func{I_1\times J}{\cC} \to \Func{I_2\times J}{\cC}\ \mbox{ and }\ 
 f_! \colon \Func{I_1}{(\Func{J}{\cC})}\to \Func{I_2}{(\Func{J}{\cC})}
\end{aligned}
\]
exist and can be identified modulo the natural equivalence of categories
$$\Func{I_i\times J}{\cC} \simeq \Func{I_i}{(\Func{J}{\cC})}.$$
\end{lem}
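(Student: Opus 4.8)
The plan is to deduce the identification formally, from the uniqueness of adjoints, after observing that both functors are by construction left adjoints to restriction functors that match up under the currying equivalence. Thus I would not compute either Kan extension explicitly.

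First I would record existence. For $(f\times\id)_!$, the left Kan extension of any $F\in\Func{I_1\times J}{\cC}$ along $f\times\id$ exists because $\cC$ is cocomplete and $I_1\times J$, $I_2\times J$ are small (the standard pointwise formula, cf.\ \cite[\textsection X.3]{MacLane}); this pointwise left Kan extension is precisely the value of the left adjoint $(f\times\id)_!$ to $(f\times\id)^*$. For $f_!$ I would first note that $\Func{J}{\cC}$ is again cocomplete, since colimits in a functor category are computed objectwise in $\cC$; hence the same existence theorem, applied with target $\Func{J}{\cC}$ in place of $\cC$, produces $f_!\colon\Func{I_1}{\Func{J}{\cC}}\to\Func{I_2}{\Func{J}{\cC}}$ as the left adjoint to $f^*$.

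Next I would set up the currying equivalence $E\colon\Func{I\times J}{\cC}\xrightarrow{\ \simeq\ }\Func{I}{\Func{J}{\cC}}$, sending $F$ to the functor $i\mapsto F(i,-)$. The one computation to carry out is that $E$ intertwines the two restriction functors, i.e.\ that $E\circ(f\times\id)^*\cong f^*\circ E$: both send $F\in\Func{I_2\times J}{\cC}$ to the functor $i_1\mapsto F(fi_1,-)$, which is an immediate unwinding of the definitions. Then I would invoke uniqueness of adjoints: since $(f\times\id)_!$ and $f_!$ are left adjoint to $(f\times\id)^*$ and $f^*$ respectively, and these restriction functors correspond under the equivalences $E$ on the two sides, the conjugate $E\circ(f\times\id)_!\circ E^{-1}$ is also a left adjoint to $f^*$, hence naturally isomorphic to $f_!$ by \cite[Corollary IV.1.1]{MacLane}. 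This is the desired identification.

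I do not expect a genuine obstacle here, which is consistent with the lemma being labelled straightforward: the content is packaged entirely into the commuting square of restriction functors together with uniqueness of adjoints. The only points requiring care are the cocompleteness bookkeeping (ensuring $\Func{J}{\cC}$ is cocomplete, so that $f_!$ exists) and the tautological verification of $E\circ(f\times\id)^*\cong f^*\circ E$. Should one prefer an explicit rather than formal argument, the alternative is to compare the pointwise colimit formulas directly, for which the main step would be verifying that the inclusion of comma categories $f/i_2\hookrightarrow(f\times\id)/(i_2,j)$, $(i_1,\alpha)\mapsto((i_1,j),(\alpha,\id_j))$, is final; this holds because the relevant comma category under each object is a coslice of $f/i_2$ and so has an initial object.
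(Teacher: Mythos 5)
Your proof is correct. The paper itself offers no argument for this lemma (it is simply declared ``straightforward''), and your route --- existence of both left adjoints from cocompleteness of $\cC$ and of $\Func{J}{\cC}$, the tautological compatibility $E\circ(f\times\id)^*\cong f^*\circ E$ of the restriction functors under currying, and then uniqueness of left adjoints --- is exactly the standard argument the authors leave implicit, so there is nothing to correct or compare.
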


We can now prove the formula for $(\iota\times\iota)_!$.

\begin{proof}[Proof of \cref{iotatimesiota!}]
 Given a bisimplicial object $Y$, the desired formula for $(\iota\times\iota)_!(Y)$ is a straightforward computation, using the identification
 $$(\iota\times\iota)_!(Y)=((\iota\times \id)\circ(\id \times \iota))_!(Y)=(\iota\times \id)_!\circ (\id \times \iota)_!(Y),$$
 together with \cref{FunctorCategoriesAdjShriek} and \cref{iota!}.
\end{proof}

We now proceed to the study of $({\sigma_a})_!$. The following construction (and the observation that it is always possible) will be crucial for the rest of the argument. Note that the object $[-1]$ plays an essential role, and the same construction would not make sense when replacing $\Delta_a$ with $\Delta$.
\begin{const}
\label{betais}
Let $\beta \colon [l] \to [k]$ be a morphism in $\Delta_a$, and let $-1\leq i\leq k$. Then there is a unique number $-1\leq \jbeta(i) \leq l$
and a unique pair of morphisms $ (\beta_{1,i}, \beta_{2,i})$ in $\Delta_a\times \Delta_a$ with
\[
 \beta_{1,i}\colon[\jbeta(i)]\to[i]\quad\text{ and } \quad\beta_{2,i}\colon[l-\jbeta(i)-1]\to[k-i-1]
\]
so that $\sigma_a(\beta_{1,i}, \beta_{2,i})=\beta$.
Moreover, the value of $\jbeta(i)$ is given by
 $$
\jbeta(i)=\left\{\begin{array}{lrc}
 \max \{j \in [l]\,|\,\beta(j)\leq i\}&\text{if }\{j \in [l]\,|\,\beta(j)\leq i\}\neq\varnothing,&\\ 
  -1&\text{if }\{j \in [l]\,|\,\beta(j)\leq i\}=\varnothing.&\\
 \end{array}
 \right.
$$

Indeed, if there exists $(\alpha_1,\alpha_2)$
 in $\Delta_a\times \Delta_a$ with
\[
\alpha_1\colon[l_1]\to[i]\quad\text{ and } \quad\alpha_2\colon[l_2]\to[k-i-1]
\]
 such that $\sigma_a(\alpha_1,\alpha_2)=\beta$, we necessarily have 
$$\beta(j)\leq i\text{ for }j\leq l_1\text{ and }\beta(j)>i\text{ for }j>l_1.$$
This forces the value of $l_1$ to be
\[
 l_1=\left\{\begin{array}{lrc}
 \max \{j \in [l]\,|\,\beta(j)\leq i\}&\text{if }\{j \in [l]\,|\,\beta(j)\leq i\}\neq\varnothing,&\\
  -1&\text{if }\{j \in [l]\,|\,\beta(j)\leq i\}=\varnothing.&\\
 \end{array}
 \right.
\]
If we define the maps
$$\beta_{i,1}\colon[\jbeta(i)]\to[i]\text{ and }\beta_{i,2}\colon[l-\jbeta(i)-1]\to[k-i-1]$$
by means of the assignments
$$r\mapsto\beta(r)\quad\text{ and }\quad r\mapsto\beta(\jbeta(i)+1+r)-i-1,\text{ respectively,}$$ 
then one can check that $\sigma_a(\beta_{1,i}, \beta_{2,i})=\beta$ and the pair $(\beta_{i,1},\beta_{i,2})$ is unique with this property.

\end{const}

\begin{ex}
The coface map $d^k \colon[n]\to[n+1]$ can be written as\linebreak$\sigma_a(\id_{[i]},d^{k-i-1})$ for $i<k$ and $\sigma_a(d^k, \id_{[n-i-1]})$ for $i\geq k$.
\end{ex}
We can now give the formula for $(\sigma_a)_!$, cf.~\cite[Chapter 3]{Joyal} and \cite[Lemma 5.1]{StephanThesis}. 

\begin{prop}\label{sigmaa!}
 Let $\cC$ be a cocomplete 
 category. The left adjoint
 $$(\sigma_a)_! \colon \Func{\Delta_a^{\op}\times \Delta_a^{\op}}{\cC}\to\Func{\Delta_a^{\op}}{\cC}$$
 of $(\sigma_a)^*$ is given levelwise on objects by
\[
 ((\sigma_a)_!A)_k=\coprod_{i=-1}^{k} A_{i,k-i-1}.
\]
The structure map of the simplicial set $(\sigma_a)_!A$ induced by some $\beta \colon [l] \to [k]$ is given on the $i$-th summand via the structure map 
of $A$ induced by $(\beta_{1,i}, \beta_{2,i})$,
$$A_{i,k-i-1}\to A_{\jbeta(i),l-\jbeta(i)-1}\to\coprod\limits_{j=-1}^{l} A_{j,l-j-1}= ((\sigma_a)_!A)_l,$$
where $\beta_{1,i}\colon[\jbeta(i)]\to[i]$ and $\beta_{2,i}\colon[l-\jbeta(i)-1]\to[k-i-1]$
are the morphisms described in \cref{betais} and uniquely determined by the condition $\sigma_a(\beta_{1,i}, \beta_{2,i})=\beta$.
\end{prop}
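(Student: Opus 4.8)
The plan is to verify directly that the functor defined by the stated formula is left adjoint to $(\sigma_a)^*$, which simultaneously pins down the structure maps claimed in the proposition. Throughout, the decisive input will be the \emph{uniqueness} clause of \cref{betais}: every morphism of $\Delta_a$, together with a chosen cut of its target, splits uniquely through $\sigma_a$. I emphasise in advance that the presence of the initial object $[-1]$ is essential, as it is exactly what guarantees that a splitting exists for \emph{every} cut, so that the summands with $i=-1$ or $i=k$ (and source cuts $\jbeta(i)=-1$) are handled on the same footing.

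First I would check that the proposed object $(\sigma_a)_!A$, with the stated structure maps, is a genuine presheaf on $\Delta_a$, i.e.\ that its structure maps compose correctly. For composable $\gamma\colon[m]\to[l]$ and $\beta\colon[l]\to[k]$ in $\Delta_a$ and a cut $-1\le i\le k$, the map induced by $\beta$ carries the $i$-th summand $A_{i,k-i-1}$ into the $\jbeta(i)$-th summand via $(\beta_{1,i},\beta_{2,i})$, and the map induced by $\gamma$ then carries that summand into the $j_\gamma(\jbeta(i))$-th summand via $(\gamma_{1,\jbeta(i)},\gamma_{2,\jbeta(i)})$. Since $\sigma_a$ is a (monoidal) functor, $\sigma_a(\beta_{1,i}\circ\gamma_{1,\jbeta(i)},\ \beta_{2,i}\circ\gamma_{2,\jbeta(i)})=\beta\circ\gamma$, and the first component of this composite lands in $[i]$; hence it is a splitting of $\beta\gamma$ with target cut at $i$. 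By the uniqueness in \cref{betais} it must coincide with $(\,(\beta\gamma)_{1,i},(\beta\gamma)_{2,i}\,)$, and we must have $j_{\beta\gamma}(i)=j_\gamma(\jbeta(i))$. This yields both the cocycle identity for the indices and the matching of the two composite structure maps (using contravariant functoriality of $A$), so functoriality holds.

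Next I would produce the adjunction bijection. A morphism $g\colon A\to(\sigma_a)^*X$ is a family $g_{p,q}\colon A_{p,q}\to X_{p+1+q}$ natural in $([p],[q])$, while a morphism $f\colon(\sigma_a)_!A\to X$ amounts, by the universal property of the coproduct, to a family $f_k^{(i)}\colon A_{i,k-i-1}\to X_k$ (for $-1\le i\le k$) natural in $[k]$. Since $(k,i)\mapsto(i,k-i-1)$ is a bijection between $\{(k,i): -1\le i\le k\}$ and $\{(p,q): p,q\ge -1\}$, setting $f_k^{(i)}:=g_{i,k-i-1}$ gives a bijection of underlying families, and the content is that it matches the two naturality conditions. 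Unwinding naturality of $f$ along $\beta\colon[l]\to[k]$ on the $i$-th summand yields exactly $X(\beta)\circ f_k^{(i)}=f_l^{(\jbeta(i))}\circ A(\beta_{1,i},\beta_{2,i})$, which under the substitution $k=p+1+q$, $i=p$, $\beta=\sigma_a(\beta_1,\beta_2)$ is precisely naturality of $g$ along $(\beta_1,\beta_2)$. Conversely, any $\beta$ with a cut $i$ arises this way by \cref{betais} (taking $(\beta_1,\beta_2)=(\beta_{1,i},\beta_{2,i})$), so $g$-naturality returns $f$-naturality for every $\beta$ and every cut. Naturality of the bijection in $X$ is immediate, establishing $(\sigma_a)_!\dashv(\sigma_a)^*$ with the claimed structure maps.

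The step I expect to be the main obstacle is the composition-compatibility (functoriality), since it is where the bookkeeping of two successive splittings must be reconciled; the point is that one never computes the $\beta_{j,i}$ explicitly, but instead recognises the composite as a splitting of $\beta\gamma$ and appeals to uniqueness. Equivalently, one could phrase the entire argument through the pointwise left Kan extension formula, exhibiting the comma category $(\sigma_a)^{\op}\downarrow[k]$ as a disjoint union of $k+2$ categories, the $i$-th having terminal object $\big(([i],[k-i-1]),\id_{[k]}\big)$, so that the colimit collapses to the stated coproduct; the verification of that decomposition is the very same uniqueness statement.
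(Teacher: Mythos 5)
Your argument is correct, but it takes a genuinely different route from the paper. The paper computes $((\sigma_a)_!A)_k$ via the pointwise left Kan extension formula of \cite[Theorem X.3.1]{MacLane}, first proving (\cref{finality}) that the discrete functor $J\colon\{-1,0,\dots,k\}\to[k]\downarrow\sigma_a$, $j\mapsto(j,k-j-1,\id_{[k]})$, is initial --- which is where the uniqueness clause of \cref{betais} enters --- and then obtaining the structure maps by tracing the reindexing of the colimit along $J$. You instead take the stated formula as the \emph{definition} of a candidate functor, verify that it is a presheaf by establishing the cocycle identity $j_{\beta\gamma}(i)=j_{\gamma}(\jbeta(i))$ and the matching of splittings via the uniqueness in \cref{betais}, and then exhibit the adjunction bijection by hand, matching the naturality square for $f\colon(\sigma_a)_!A\to X$ along $\beta$ restricted to the $i$-th summand with the naturality square for $g\colon A\to(\sigma_a)^*X$ along $(\beta_{1,i},\beta_{2,i})$; the bijection $(k,i)\mapsto(i,k-i-1)$ and the fact that every morphism of $\Delta_a\times\Delta_a$ arises as some $(\beta_{1,i},\beta_{2,i})$ make this an exact correspondence. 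Both proofs ultimately rest on the same combinatorial input, namely the existence and uniqueness of the splitting in \cref{betais} (your closing remark correctly identifies your argument as equivalent to decomposing the comma category into pieces with terminal objects, which is the content of \cref{finality}). What your approach buys is that the structure maps come for free from the definition rather than from a reindexing argument, and no cofinality machinery is needed; what the paper's approach buys is reuse of \cref{finality} and of the standard Kan extension formula, which it also invokes later (in the proofs of \cref{sigma!} and \cref{DecCounit}) to identify the counit.
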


The proof will make use of the following description of the slice category $[k]\downarrow\sigma_a$.

\begin{rmk}
\label{descriptioncomma}
Observe that the set of objects of $[k]\downarrow\sigma_a$
can be canonically identified with
$$\{(l_1,l_2, \gamma)\ |\ l_1,l_2\ge-1,\gamma \colon [k]\to\sigma_a([l_1],[l_2])=[l_1+1+l_2]\text{ in }\Delta_a\}.$$
Modulo this identification, a morphism in $[k]\downarrow\sigma_a$ from $(l_1',l_2',\gamma')$ to $(l_1, l_2, \gamma)$ consists of a pair of morphisms $(\alpha_1,\alpha_2)$ in $\Delta_a\times \Delta_a$ 
with
\[
\alpha_1\colon[l_1']\to[l_1]\quad\text{ and } \quad\alpha_2\colon[l_2']\to[l_2]
\]
such that $\sigma_a([\alpha_1],[\alpha_2])\circ \gamma'=\gamma$.
\end{rmk}

For any $k\ge-1$, we regard the set $\{-1,0,1,\dots,k\}$ as a discrete category.
\begin{lem}
\label{finality}
The functor
$$J\colon\{-1,0,1,\dots,k\}\to[k]\downarrow\sigma_a\quad\text{ given by }\quad j\mapsto (j, k-j-1, \id_{[k]})$$
is initial.
Equivalently, its opposite functor
$$J\colon\{-1,0,1,\dots,k\}\to([k]\downarrow\sigma_a)^{\op}\cong\sigma_a^{\op}\downarrow[k]$$
is final.
\end{lem}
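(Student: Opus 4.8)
The plan is to verify the standard criterion for initiality: for every object $(l_1,l_2,\gamma)$ of $[k]\downarrow\sigma_a$, the comma category $J\downarrow(l_1,l_2,\gamma)$ is nonempty and connected. The decisive simplification is that the source $\{-1,0,1,\dots,k\}$ is discrete, so this comma category is itself discrete: a morphism $(j,f)\to(j',f')$ in it would be given by a morphism $j\to j'$ in $\{-1,\dots,k\}$, hence $j=j'$ and then $f=f'$. Consequently, proving nonemptiness and connectedness amounts to showing that $J\downarrow(l_1,l_2,\gamma)$ has \emph{exactly one} object.

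Next I would unwind what an object of $J\downarrow(l_1,l_2,\gamma)$ is. It consists of an element $j\in\{-1,\dots,k\}$ together with a morphism in $[k]\downarrow\sigma_a$ from $J(j)=(j,k-j-1,\id_{[k]})$ to $(l_1,l_2,\gamma)$. By the description of morphisms in \cref{descriptioncomma}, such a morphism is a pair $(\alpha_1,\alpha_2)$ in $\Delta_a\times\Delta_a$ with
\[
\alpha_1\colon[j]\to[l_1]\quad\text{and}\quad\alpha_2\colon[k-j-1]\to[l_2]
\]
satisfying $\sigma_a(\alpha_1,\alpha_2)\circ\id_{[k]}=\gamma$, i.e.\ simply $\sigma_a(\alpha_1,\alpha_2)=\gamma$. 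In other words, an object of the comma category is precisely a factorization of $\gamma\colon[k]\to[l_1+1+l_2]$ as an ordinal sum that splits the codomain at the position $l_1$.

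The heart of the argument is then to recognize this as an instance of \cref{betais}. Applying that construction to the morphism $\beta=\gamma\colon[k]\to[l_1+1+l_2]$ and to the index $i=l_1$ produces a unique integer $-1\leq j_{\gamma}(l_1)\leq k$ and a unique pair $(\gamma_{1,l_1},\gamma_{2,l_1})$ with $\gamma_{1,l_1}\colon[j_{\gamma}(l_1)]\to[l_1]$ and $\gamma_{2,l_1}\colon[k-j_{\gamma}(l_1)-1]\to[l_2]$ such that $\sigma_a(\gamma_{1,l_1},\gamma_{2,l_1})=\gamma$. Since $-1\leq j_{\gamma}(l_1)\leq k$, this integer lies in the index set of the source of $J$, so the triple is exactly the data of a single object of $J\downarrow(l_1,l_2,\gamma)$, with $j=j_{\gamma}(l_1)$; the uniqueness clause of \cref{betais} guarantees there are no others. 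Hence the comma category is a one-object discrete category, so it is nonempty and connected, and $J$ is initial. The final sentence of the statement, that the opposite functor into $\sigma_a^{\op}\downarrow[k]$ is final, is then just the dual reformulation of initiality.

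I expect the only subtle point to be bookkeeping the roles of the two integer parameters: \cref{betais} splits the \emph{codomain} of $\beta$ at the chosen index and the \emph{domain} accordingly, and one must line this up against the comma-category factorization, where it is the codomain $[l_1+1+l_2]$ of $\gamma$ that is split at $l_1$ while the domain $[k]$ is split at $j$. Once this matching is set up correctly, the uniqueness asserted in \cref{betais} does all the work, and no further computation is needed.
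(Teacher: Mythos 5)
Your proof is correct and follows essentially the same route as the paper: both reduce initiality to showing that each comma category $J\downarrow(l_1,l_2,\gamma)$ has exactly one object, and both obtain that object by applying the existence-and-uniqueness statement of \cref{betais} to $\gamma$ at the index $l_1$. Your additional remark that the comma category is discrete is harmless but not needed, since a category with precisely one object is automatically nonempty and connected.
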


\begin{proof}[Proof of \cref{finality}]

Given an object $(i_1,i_2,\gamma)$ of $[k]\downarrow\sigma_a$, by \cref{betais} there exists a unique 
pair of morphisms $(\alpha_1,\alpha_2)$ in $\Delta_a\times \Delta_a$ 
with
\[
\alpha_1\colon[\jbeta(i_1)]\to[i_1]\quad\text{ and } \quad\alpha_2\colon[k-\jbeta(i_1)-1]\to[i_2]
\]
such that $\sigma_a([\alpha_1],[\alpha_2])=\gamma$. In other words, there exists a unique $j$, 
with $-1\leq j\leq k$, and a unique morphism in $[k]\downarrow\sigma_a$ to $(i_1,i_2,\gamma)$ from an element of the form $J(j)=(j, k-j-1, \id_{[k]})$. 
This proves that, for any object $(i_1,i_2,\gamma)$ of $[k]\downarrow\sigma_a$, the slice category $J\downarrow(i_1,i_2,\gamma)$
has precisely one object.
In particular, the slice category $J\downarrow(i_1,i_2,\gamma)$ is connected and not empty. Thus
$$J\colon\{-1,0,1,\dots,k\}\to[k]\downarrow\sigma_a$$
is initial.
\end{proof}

We can now prove the proposition.

\begin{proof}[Proof of \cref{sigmaa!}]
We deduce the formula for $((\sigma_a)_!A)_k $ by means of the pointwise left Kan extension formula from \cite[Theorem X.3.1]{MacLane},
the cofinality of $J$ from \cref{finality}, and the key property of cofinal functors from \cite[Theorem IX.3.1]{MacLane}: 
$$
\begin{array}{rcl}
 ((\sigma_a)_!A)_k &\cong &\colim \left(\sigma_a^{\op}\downarrow[k]\cong([k]\downarrow\sigma_a)^{\op}\to(\Delta_a\times \Delta_a)^{\op}\xrightarrow{A} \cC\right)\\
&\cong&\colim \left(\{-1,0,\dots,k\}\xrightarrow{J} (\sigma_a\downarrow [k])^{\op} \to(\Delta_a\times \Delta_a)^{\op} \xrightarrow{A} \cC\right)\\
&\cong&\coprod\limits_{j=-1}^{k} A_{j,k-j-1}.
\end{array}
$$
We now describe the structure map 
$$((\sigma_a)_!A)_{k}\to((\sigma_a)_!A)_{l}$$
of $(\sigma_a)_!A$
induced by a map $\beta\colon[l]\to[k]$ in $\Delta_a$ under the chain of isomorphisms above.
By \cite[Theorem X.3.1]{MacLane}, the map induced by $\beta$ 
$$\underset{(i_1,i_2,\gamma)\in\sigma_a^{\op}\downarrow[k]}{\colim}A_{i_1,i_2}\to\underset{(j_1,j_2,\delta)\in\sigma_a^{\op}\downarrow[l]}{\colim}A_{j_1,j_2}$$
identifies the copy of $A_{i_1,i_2}$ corresponding to the component $(i_1,i_2,\gamma)$ of the left-hand side with the copy of $A_{i_1,i_2}$ corresponding to the component $(i_1,i_2,\gamma\circ \beta)$ of the right-hand side. 
By reindexing the colimits according 
to \cite[Theorem IX.3.1]{MacLane} and \cref{finality}, one can check that the map induced by $\beta$ on the sums 
$$\coprod\limits_{i=-1}^{k} A_{i,k-i-1}\cong\underset{i\in\{-1,0,\dots,k\}}{\colim}A_{i,k-i-1}\to\underset{j\in\{-1,0,\dots,l\}}{\colim}A_{j,l-j-1}\cong \coprod\limits_{j=-1}^{l} A_{j,l-j-1}$$
is induced by $(\beta_{i,1},\beta_{i,2})$ on the $i$-th summand $A_{i,k-i-1}$.
\end{proof}

We collect the insights so far to obtain a formula for $\sigma_!$.
\begin{prop}\label{sigma!}
 Let $\cC$ be a cocomplete category.
 The left adjoint
 $$\sigma_! \colon \Func{\Delta^{\op}\times \Delta^{\op}}{\cC}\to\Func{\Delta^{\op}}{\cC}$$
 of $\sigma^*$ is given levelwise on objects by
\[
 (\sigma_!Y)_k\cong \pi_0(Y_{-,k}) \sqcup \coprod_{i=0}^{k-1} Y_{i, k-i-1} \sqcup \pi_0(Y_{k,-}). 
\]

The structure map of the simplicial object
$\sigma_!Y$ induced by some $\beta \colon [l] \to [k]$  in $\Delta$ is given on the first summand of $(\sigma_!Y)_k$ by the maps induced on $\pi_0$, i.e.,
$$\pi_0(Y_{k,-})\to \pi_0(Y_{l,-})\to \pi_0(Y_{-,l}) \sqcup \coprod_{j=0}^{l-1} Y_{j, l-j-1} \sqcup \pi_0(Y_{l,-}),$$
and is given on the last summand of $(\sigma_!Y)_k$ by the dual map induced on $\pi_0$.

To describe the structure map induced by the same $\beta\colon [l]\to [k]$ on the $i$-th summand $Y_{i,k-i-1}$ occurring in $(\sigma_!Y)_k$, decompose $\beta$ as a map of $\Delta_a$ using \cref{betais} as $\beta=\sigma_a(\beta_{i,1},\beta_{i,2})$ with
$$\beta_{1,i}\colon[\jbeta(i)]\to[i]\text{ and }\beta_{2,i}\colon[l-\jbeta(i)-1]\to[k-i-1].$$
If $-1<\jbeta(i)<l$, the map $\beta$ acts on $Y_{i,k-i-1}$ as the structure map of $Y$ corresponding to $(\beta_{1,i},\beta_{2,i})$,
$$Y_{i,k-i-1}\to Y_{\jbeta(i),l-\jbeta(i)-1}\to \pi_0(Y_{-,l}) \sqcup \coprod_{j=0}^{l-1} Y_{j, l-j-1} \sqcup \pi_0(Y_{l,-}).$$
If $\jbeta(i)=-1$, the map $\beta$ acts on $Y_{i,k-i-1}$ as the structure map of $Y$ corresponding to $(\id_{[i]},\beta_{2,i})$ composed with the map onto $\pi_0$,
$$Y_{i,k-i-1}\to Y_{i,l}\to Y_{0,l}\to  \pi_0(Y_{-,l}) \to \pi_0(Y_{-,l}) \sqcup \coprod_{j=0}^{l-1} Y_{j, l-i-1} \sqcup \pi_0(Y_{l,-}).$$
Dually, if $\jbeta(i)=k$, the map $\beta$ acts on $Y_{i,k-i-1}$ as the structure map of $Y$ corresponding to $(\beta_{i,1},\id_{[k-i-1]})$ composed with the map onto $\pi_0$.

Under the identification above, $\sigma_!$ acts on a bisimplicial map as a sum of the corresponding components together with the induced maps on $\pi_0$.
\end{prop}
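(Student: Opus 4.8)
The plan is to compute $\sigma_!$ by composing the three functors in the factorization $\sigma_!Y\cong\iota^*(\sigma_a)_!(\iota\times\iota)_!Y$ recalled above from \cite{StevensonParametrized}, feeding in the explicit formulas from \cref{iotatimesiota!} and \cref{sigmaa!}. Writing $A:=(\iota\times\iota)_!Y$, \cref{sigmaa!} gives for $k\geq0$ the expression
\[((\sigma_a)_!A)_k=\coprod_{i=-1}^{k}A_{i,k-i-1}=A_{-1,k}\sqcup\coprod_{i=0}^{k-1}A_{i,k-i-1}\sqcup A_{k,-1}.\]
Substituting the four cases of \cref{iotatimesiota!}, the two outer summands become $A_{-1,k}=\pi_0(Y_{-,k})$ and $A_{k,-1}=\pi_0(Y_{k,-})$, whereas each middle summand has both indices nonnegative, so $A_{i,k-i-1}=Y_{i,k-i-1}$. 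Applying $\iota^*$, which only forgets the $[-1]$-level, leaves these unchanged and yields the claimed object-level formula.

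For the structure maps I would fix $\beta\colon[l]\to[k]$ in $\Delta$, regard it in $\Delta_a$, and run it through \cref{sigmaa!}: on the $i$-th summand the induced map is the structure map of $A$ attached to the pair $(\beta_{1,i},\beta_{2,i})$ of \cref{betais}, landing in the $\jbeta(i)$-th summand of $((\sigma_a)_!A)_l$. The whole point is to evaluate $\jbeta(i)$ for each summand type. For the first summand $i=-1$, since $\beta$ takes values in $\{0,\dots,k\}$ the set $\{j\mid\beta(j)\leq-1\}$ is empty, whence $\jbeta(-1)=-1$; one then reads off $\beta_{1,-1}=\id_{[-1]}$ and $\beta_{2,-1}=\beta$, so the induced map is the map $\pi_0(Y_{-,k})\to\pi_0(Y_{-,l})$ coming from $\beta$ acting on the second variable. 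Dually, for $i=k$ one finds $\jbeta(k)=l$, $\beta_{1,k}=\beta$ and $\beta_{2,k}=\id_{[-1]}$, giving the dual map on $\pi_0(Y_{k,-})$.

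The substantive case is a middle summand $Y_{i,k-i-1}$, where the behaviour is governed by whether $\jbeta(i)$ is interior or extreme. When $-1<\jbeta(i)<l$, both target indices are nonnegative, so $A_{\jbeta(i),l-\jbeta(i)-1}=Y_{\jbeta(i),l-\jbeta(i)-1}$ and the map is simply the structure map of $Y$ for $(\beta_{1,i},\beta_{2,i})$. When $\jbeta(i)=-1$ the target summand is $A_{-1,l}=\pi_0(Y_{-,l})$; here I would factor $(\beta_{1,i},\beta_{2,i})$ through the intermediate object $([i],[l])$ and use \cref{additionalmaps} together with \cref{iotatimesiota!} to recognise the induced map along $\beta_{1,i}\colon[-1]\to[i]$ as a vertex map $Y_{i,l}\to Y_{0,l}$ followed by the quotient $Y_{0,l}\to\pi_0(Y_{-,l})$, the total composite being independent of the chosen vertex because $\pi_0$ coequalizes the two face maps. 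The case $\jbeta(i)=l$ is dual, producing the quotient onto $\pi_0(Y_{l,-})$. Functoriality in $Y$ is then immediate, since each of the three functors in the factorization is functorial and acts componentwise together with the induced maps on $\pi_0$.

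I expect the main obstacle to be the bookkeeping in the extreme cases $\jbeta(i)\in\{-1,l\}$: one must correctly decompose the $\Delta_a$-morphism involving the initial object $[-1]$ according to \cref{additionalmaps} and verify that the resulting composite into $\pi_0$ does not depend on the auxiliary vertex through which the map $[-1]\to[i]$ is factored, i.e.\ that the quotient map genuinely absorbs the ambiguity coming from the non-uniqueness of the factorization through $[0]$.
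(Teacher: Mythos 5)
Your proposal is correct and follows essentially the same route as the paper: both compute $\sigma_!$ via the factorization $\iota^*(\sigma_a)_!(\iota\times\iota)_!$ from \cite{StevensonParametrized}, substitute the formulas of \cref{iotatimesiota!} and \cref{sigmaa!}, and then analyze the structure maps by cases on the value of $\jbeta(i)$ from \cref{betais}. The only cosmetic difference is that the paper first reduces to the case $\beta_{2,i}=\id$ by factoring $(\beta_{1,i},\beta_{2,i})=(\beta_{1,i},\id)\circ(\id,\beta_{2,i})$, whereas you treat the pair directly and make explicit the (correct) observation that the composite into $\pi_0$ is independent of the chosen vertex factorization of $[-1]\to[i]$.
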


\begin{proof}
Given a bisimplicial object $Y$, we use the identification
 $$\sigma_!(Y)\cong\iota^*(\sigma_a)_!(\iota\times \iota)_!(Y)$$
from \cite[\textsection2]{StevensonParametrized}, together with \cref{sigmaa!} and \cref{iotatimesiota!},
to obtain the isomorphisms for any $k\ge0$
$$
\sigma_!(Y)_k\cong\iota^*(\sigma_a)_!(\iota\times \iota)_!(Y)_k\cong(\sigma_a)_!(\iota\times \iota)_!(Y)_k\cong\coprod_{i=-1}^{k} (\iota\times \iota)_!(Y)_{i,k-i-1},
$$
where
\[
 (\iota\times \iota)_!(Y)_{i,k-i-1}=\left\{\begin{array}{ll}
		\pi_0(Y_{i,-})=\pi_0(Y_{k,-}), &\mbox{ if }i=k,\\
		\pi_0(Y_{-,k-i-1})=\pi_0(Y_{-,k}),&\mbox{ if } i=-1,\\
		Y_{i,k-i-1} &\mbox{ if }-1<i<k.
              \end{array}\right.
\]

We now describe the structure map
$$\sigma_!(Y)_k\to\sigma_!(Y)_l$$
of $\sigma_!(Y)$
induced by a map $\beta\colon[l]\to[k]$ in $\Delta$ under the chain of isomorphisms above, by saying how it acts on every summand of $\sigma_!(Y)_k$.
By \cref{sigma!} and \cref{identificationpi0}, the map $\beta^*$ acts on the $i$-th summand of $\sigma_!(Y)_k$ as the structure map of $(\iota\times \iota)_!(Y)$ induced by $(\beta_{1,i},\beta_{2,i})$,
\[
 (\beta_{1,i},\beta_{2,i})^* \colon  \left((\iota\times\iota)_!Y\right)_{i, k-i-1}\to \left((\iota\times\iota)_!Y\right)_{\jbeta(i), l-\jbeta(i)-1},
\]
where
$ \beta_{1,i}\colon[\jbeta(i)]\to[i]$ and $\beta_{2,i}\colon[l-\jbeta(i)-1]\to[k-i-1]$
are so that $\sigma_a(\beta_{1,i},\beta_{2,i})=\beta$, as in \cref{betais}. To further rewrite these, we need to distinguish several cases. Factoring $(\beta_{1,i},\beta_{2,i})=(\beta_{1,i},\id)\circ (\id,\beta_{2,i})$, we may assume that $k-i=l-\jbeta(i)$ and $\beta_{2,i}=\id_{[k-i-1]}$, since the two parts can be treated with similar arguments. By means of \cref{iotatimesiota!}, we describe the map
\[
 (\beta_{1,i},\id)^* \colon  \left((\iota\times\iota)_!Y\right)_{i, k-i-1}\to \left((\iota\times\iota)_!Y\right)_{\jbeta(i), k-i-1}.
\]
by distinguishing several cases.
\begin{itemize}
	\item If $-1<i<k$ and  $-1<\jbeta(i)<l$, we obtain
precisely the structure map of $Y$ induced by $(\beta_{1,i},\id)$,
\[
 (\beta_{1,i},\id)^* \colon  Y_{i, k-i-1}\to Y_{\jbeta(i), k-i-1}.
\]
	\item If $i=-1$, then $\jbeta(i)=-1$ and $\beta=\id$. We therefore obtain the identity map,
	\[
\id\colon \pi_0(Y_{-,k})\to \pi_0(Y_{-,k}).
\] 
 	\item If $i=k$, we obtain the map induced on $\pi_0$,
\[
\beta^*\colon \pi_0(Y_{k,-})\to \pi_0(Y_{l,-}).
\] 

	\item If $-1<i<k$ and  $\jbeta(i)=-1$, we obtain the quotient map,
	\[
Y_{i,k}\to\pi_0(Y_{-,k}).
\] 
	\item If $-1<i<k$ and  $\jbeta(i)=l$, we obtain the quotient map composed with $\beta^*$,
		\[
Y_{i,k}\to\pi_0(Y_{l,-}).\qedhere
\]
\end{itemize}
\end{proof}

We will need the following property of simplicial objects, which is a variant of \cite[Lemma 89]{VerityComplicialAMS}.

\begin{lem}\label{CoeqSimplicial}
 Let $\cC$ be any category. For any a simplicial object $X$ in $\cC$ the diagram
\[
 \begin{tikzcd}
   X_k \arrow[r, shift left, "d_i"] \arrow[r, shift right, "d_{i+1}" swap]  & X_{k-1} \arrow[r, "d_i"]  & X_{k-2}
 \end{tikzcd}
\]
is a coequalizer diagram in $\cC$ for all $k\geq 2$ and all $0\leq i\leq k-1$.
\end{lem}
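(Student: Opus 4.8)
The plan is to prove more than a bare coequalizer: I would exhibit the fork as a \emph{split coequalizer}, i.e.\ produce sections realizing it as an absolute coequalizer. Since the defining equations of a split coequalizer are purely diagrammatic, this shows the fork is an honest coequalizer in the arbitrary category $\cC$, needing no (co)completeness hypotheses whatsoever. Recall the shape of the data: a commuting fork $A\rightrightarrows B\xrightarrow{e}C$ with parallel maps $f,g$ (so $ef=eg$) is a split coequalizer as soon as there are maps $s\colon C\to B$ and $t\colon B\to A$ with $es=\id$, $ft=\id$ and $gt=se$; for then any $h$ with $hf=hg$ factors uniquely as $\bar h=hs$, the computation being $\bar h e=hse=hgt=hft=h$. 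The whole task therefore reduces to finding $s$ and $t$, and here every map I need will be a degeneracy of $X$, so the argument lives entirely in $\cC$.

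First I would note that the fork commutes, which is the simplicial identity $d_id_{i+1}=d_id_i$ (the $i<i+1$ instance of $d_id_j=d_{j-1}d_i$), so that $e:=d_i$ really does coequalize $d_i$ and $d_{i+1}$. For the splitting in the range $0\le i\le k-2$, I would keep $e=d_i$, label the parallel pair as $f=d_{i+1}$ and $g=d_i$, and take $s=s_i\colon X_{k-2}\to X_{k-1}$ together with $t=s_{i+1}\colon X_{k-1}\to X_k$. The four required equations then read straight off the simplicial identities: $d_is_i=\id$ and $d_{i+1}s_{i+1}=\id$ give $es=\id$ and $ft=\id$, while $d_is_{i+1}=s_id_i$ gives exactly $gt=se$. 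This settles every index except the top one.

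The one genuine subtlety, and the step I would watch most carefully, is the boundary case $i=k-1$: there $s_i$ is not available as a map $X_{k-2}\to X_{k-1}$, and the labeling above would break the asymmetric equation $gt=se$. Here I would instead retain $e=d_{k-1}$ but set $s=t=s_{k-2}$, labeling $f=d_{k-1}$ and $g=d_k$; then $d_{k-1}s_{k-2}=\id$ gives $es=ft=\id$, and the identity $d_ks_{k-2}=s_{k-2}d_{k-1}$ (the $i>j+1$ case of $d_is_j=s_jd_{i-1}$) supplies $gt=se$. Because the coequalizer of a parallel pair is insensitive to which arrow we name $f$ and which $g$, both labelings are legitimate, and in either regime the split-coequalizer equations hold in $\cC$. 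The only thing that requires care is bookkeeping: matching each degeneracy used against a valid simplicial identity while staying inside the admissible index ranges $0\le i\le k-1$ and $k\ge2$, which is precisely what forces the small case split between the generic indices and the top one.
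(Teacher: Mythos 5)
Your proof is correct and follows essentially the same route as the paper: both exhibit the fork as a split coequalizer using the degeneracies $s_{i+1}$ and $s_i$ for $0\le i\le k-2$ and $s_{k-2}$ twice in the boundary case $i=k-1$. You merely spell out the verification of the split-fork identities in more detail than the paper, which delegates them to the simplicial identities and cites MacLane.
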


\begin{proof}
As a consequence of the simplicial identities for $X$, the diagram \[
 \begin{tikzcd}
   X_k \arrow[r, shift left, "d_i"] \arrow[r, shift right, "d_{i+1}" swap]  & X_{k-1} \arrow[r, "d_i"]  & X_{k-2}
 \end{tikzcd}
\]
becomes a \emph{split fork} (in the sense of 
\cite[\textsection VI.6]{MacLane})
when considered together with the maps
\[
 X_k\xleftarrow{s_{i+1}} X_{k-1} \xleftarrow{s_i} X_{k-2}
\]
when $i<k-1$ and together with the maps
\[
 X_k\xleftarrow{s_{k-2}} X_{k-1} \xleftarrow{s_{k-2}} X_{k-2}
\]
when $i=k-1$. 
By \cite[Lemma VI.6]{MacLane}, the original diagram is therefore a split coequalizer, and in particular a coequalizer.
\end{proof}

We now use this lemma to identify $\pi_0((\sigma^*X)_{k,-})$ and $\pi_0((\sigma^*X)_{-,k})$ with $X_{k}$.

\begin{prop}
\label{identificationpi0}
Let $\cC$ be a cocomplete category. For any simplicial object $X$ in $\cC$ there are isomorphisms
$$
\pi_0((\sigma^*X)_{-,k})
\cong X_{k}\cong\pi_0((\sigma^*X)_{k,-}),$$
which are natural in $X$ and $k$.
\end{prop}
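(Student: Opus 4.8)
The plan is to compute the two simplicial objects $(\sigma^*X)_{k,-}$ and $(\sigma^*X)_{-,k}$ explicitly and then recognize the coequalizer defining each $\pi_0$ as an instance of \cref{CoeqSimplicial}. Recall that $(\sigma^*X)_{i,l}=X_{i+1+l}$, so that $(\sigma^*X)_{k,-}$ is the simplicial object $l\mapsto X_{k+1+l}$ and $(\sigma^*X)_{-,k}$ is $i\mapsto X_{i+1+k}$. By \cref{definitionpi0}, computing either $\pi_0$ amounts to identifying the two face maps $d_0,d_1$ emanating from level $1$, i.e.\ the two maps $X_{k+2}\to X_{k+1}$.

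First I would pin down these face maps. In $(\sigma^*X)_{k,-}$ the operator $d_j$ is induced by $\sigma(\id_{[k]},d^j)$; since the ordinal sum places the second block after the first, this is the coface $d^{k+1+j}$ of $[k+1+l]$, so $d_j$ agrees with the face $d_{k+1+j}$ of $X$. In particular the two face maps $d_0,d_1\colon X_{k+2}\to X_{k+1}$ are the \emph{adjacent} faces $d_{k+1},d_{k+2}$ of $X$. Dually, in $(\sigma^*X)_{-,k}$ the operator $d_j$ is induced by $\sigma(d^j,\id_{[k]})$, which is the coface $d^j$ of $[i+1+k]$, so $d_j$ agrees with the face $d_j$ of $X$; thus $d_0,d_1\colon X_{k+2}\to X_{k+1}$ are the adjacent faces $d_0,d_1$ of $X$.

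With the face maps identified, both coequalizers match the hypotheses of \cref{CoeqSimplicial} exactly. For the first isomorphism, the fork $X_{k+2}\rightrightarrows X_{k+1}\to X_k$ given by the pair $(d_{k+1},d_{k+2})$ and by $d_{k+1}$ is precisely the coequalizer of \cref{CoeqSimplicial} (taken at level $k+2$ and with $i=k+1$), so $\pi_0((\sigma^*X)_{k,-})=\colim\left(X_{k+2}\rightrightarrows X_{k+1}\right)\cong X_k$, the isomorphism being induced by $d_{k+1}\colon X_{k+1}\to X_k$. Symmetrically, the fork built from $(d_0,d_1)$ and $d_0$ (level $k+2$, $i=0$) gives $\pi_0((\sigma^*X)_{-,k})\cong X_k$, induced by $d_0\colon X_{k+1}\to X_k$. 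Since each isomorphism is realized by a single structure map of $X$, and coequalizers are functorial in the diagram, both isomorphisms are automatically natural in $X$.

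The crux of the argument, and the only place demanding real care, is the adjacency observation: it is exactly because $\sigma(\id_{[k]},d^j)$ and $\sigma(d^j,\id_{[k]})$ produce \emph{consecutive} cofaces that \cref{CoeqSimplicial} is applicable. I expect the remaining obstacle to be naturality in $k$, i.e.\ checking that as $k$ varies the isomorphisms assemble into a map of simplicial objects compatibly with the $k$-direction structure maps. This should be a routine, if slightly fiddly, verification using the explicit face and degeneracy formulas together with the simplicial identities, requiring no new idea beyond the identifications above.
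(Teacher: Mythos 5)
Your proposal is correct and follows essentially the same route as the paper: identify the two face maps $d_0,d_1$ of $(\sigma^*X)_{-,k}$ (resp.\ $(\sigma^*X)_{k,-})$ at level $1$ with the adjacent faces $d_0,d_1$ (resp.\ $d_{k+1},d_{k+2}$) of $X$ on $X_{k+2}\to X_{k+1}$, and then invoke \cref{CoeqSimplicial} to recognize the defining coequalizer of $\pi_0$ as $X_k$. The paper likewise leaves naturality in $X$ and $k$ to direct inspection, so nothing is missing relative to its own level of detail.
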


\begin{proof}
By definition of $\sigma^*$, \cref{CoeqSimplicial} and \cref{definitionpi0}, we have the following isomorphisms
$$X_k\cong\colim\left(\begin{tikzcd}
 X_{k+2} \arrow[r, shift left, "{d_0}"] \arrow[r, shift right, "{d_1}" swap] & X_{k+1}
\end{tikzcd}\right)=\quad\quad\quad\quad\quad\quad\quad\quad$$
$$\quad\quad\quad\quad=\colim\left(\begin{tikzcd}
 (\sigma^*X)_{1,k} \arrow[r, shift left, "{(d^0,\id)^*}"] \arrow[r, shift right, "{(d^1,\id)^*}" swap] & (\sigma^*X)_{0,k}
 \end{tikzcd}\right)\cong\pi_0((\sigma^*X)_{-,k}),$$
 which can be checked to be natural in $X$ and $k$ by direct inspection.

A similar argument applies to the second isomorphism.
\end{proof}

We are now ready to prove \cref{DecCounit}, and describe the counit of the adjunction $(\sigma_!,\sigma^*)$. 

\begin{proof}[Proof of \cref{DecCounit}] 
For any simplicial object  $X$ in the bicomplete category $\cC$, we start by identifying $\sigma_!\sigma^*X$.

By means of \cref{iotatimesiota!}, \cref{sigma!}, \cref{identificationpi0} and \cref{CoeqSimplicial}, we obtain the identification
\begin{equation}
\label{formula}
\sigma_!(\sigma^*X)_k\cong\pi_0(\sigma^*X_{-,k})\amalg\coprod_{i=0}^{k-1} \sigma^*X_{i, k-i-1}\amalg\pi_0(\sigma^*X_{k,-})\cong\coprod_{i=-1}^{k} X_k.
\end{equation}

We now describe the structure map
$$(\sigma_!(\sigma^*X))_{k}\to(\sigma_!(\sigma^*X))_{l}$$
of $\sigma_!(\sigma^*X)$
induced by a map $\beta\colon[l]\to[k]$ in $\Delta$ under the chain of isomorphisms above.
By \cref{sigma!} and \cref{identificationpi0},
the map induced by $\beta$ on
$$\coprod_{i=-1}^{k} X_k\to\coprod_{j=-1}^{l} X_l$$
acts on the $i$-th summand $X_k$ occurring in $(\sigma_!(\sigma^*X))_k$ as the structure map of $X$ corresponding to $\beta$ with values in the $\jbeta(i)$-th copy of $X_l$ occuring in $(\sigma_!(\sigma^*X))_l$.

We now describe the $k$-th component
$$\sigma_!\sigma^*X_k\to X_k$$
of the counit of the adjunction $(\sigma_!,\sigma^*)$ at $X$ under the following chain of isomorphisms
$$
\sigma_!(\sigma^*X)_k\cong(\sigma_a)_!(\iota\times \iota)_!(\sigma^*X)_k\cong\coprod_{i=-1}^{k} (\iota\times \iota)_!(\sigma^*X)_{i,k-i-1}\cong\coprod\limits_{i=-1}^{k} X_k,
$$
which gives an alternative but equivalent description of the identification  (\ref{formula}) of $\sigma_!(\sigma^*X)_k$ with the coproduct $\coprod_{i=-1}^{k} X_k$.
We can expand $(\sigma_a)_!$ further using the pointwise Kan extension formula \cite[Theorem X.3.1]{MacLane}, obtaining
$$
\sigma_!(\sigma^*X)_k\cong \underset{(i_1,i_2,\gamma)\in\sigma_a^{\op}\downarrow[k]}{\colim}X_{i_1+1+i_2}\cong\underset{i\in\{-1,0,\dots,k\}}{\colim}X_{i+1+k-i-1}\cong\coprod\limits_{i=-1}^{k} X_k.$$
By \cite[Theorem X.3.1]{MacLane}, the $k$-th component of the counit as a map
$$\underset{(i_1,i_2,\gamma)\in\sigma^{\op}\downarrow[k]}{\colim}X_{i_1+1+i_2}\to X_k$$
acts on the $(i_1,i_2,\gamma)$-th component as the map $X_{i_1+1+i_2}\to X_k$ induced on $X$ by $\gamma$.
By reindexing the colimits according to \cite[Theorem IX.3.1]{MacLane}, one can check that the $k$-th component of the counit under this identification,
$$\coprod\limits_{i=-1}^{k} X_k\cong\underset{i\in\{-1,0,\dots,k\}}{\colim}X_{i+1+k-i-1}\to X_k,$$
is given by the folding map of $X_k$, as it acts on every copy of $X_k$ as the identity of $X_k$.

We need a further identification, which will allow us to see $\coprod_{i=-1}^{k} X_k$ as the $k$-th object of the tensor $X \boxtimes \Delta[1]$
and discuss the naturality in $k$ of this identification.
Note that there is bijection
$$\{-1,0,\dots,k\}\cong\Delta[1]_k \mbox{ given by } i\mapsto f_i,$$ 
where
\[
f_i\colon[k]\to[1]\text{ for $i=-1,\dots,k$ is given by}\;\; r\mapsto\begin{cases}
          0, \quad r\leq i, \\
	  1, \quad\mbox{ else.}
         \end{cases}
\]
This identification leads to the isomorphism
 \[
 (\sigma_!(\sigma^*X))_k\cong  \coprod\limits_{i=-1}^{k} X_k  \cong \coprod\limits_{\Delta[1]_k} X_k\cong (X\boxtimes \Delta[1])_k,
 \]
 which is seen to be natural in $k$ by direct inspection. 
 Moreover, with respect to this isomorphism,
 the folding map
$$\coprod_{i=-1}^{k} X_k\to X_k$$
 corresponds to the canonical map
 \[
 (X\boxtimes\Delta[1])_k\to X_k\cong (X\boxtimes\Delta[0])_k.
 \]
This proves the desired isomorphism of simplicial objects in $\cC$
$$(\sigma_!(\sigma^*X))\cong X\boxtimes\Delta[1],$$
which also allows us to identify the counit with the canonical map
$$X\boxtimes\Delta[1]\to X\cong X\boxtimes\Delta[0]$$
which is induced by the map $\Delta[1]\to\Delta[0].$
\end{proof}

\bibliographystyle{amsalpha}
\bibliography{refDecalage}

\end{document}